\def\lg{{\rm \langle}} \def\rg{{\rm \rangle}}
  \def\sbs{{\rm \subseteq}}
\def\wi{\widetilde}
\def\Aut{ {\rm Aut}}
\def\Inn{ {\rm Inn}}
\def\GC{{\rm GC}}
\def\Cay{{\rm Cay}}
\begin{document}

\newtheorem{theorem}{Theorem}[section]
\newtheorem{corollary}[theorem]{Corollary}
\newtheorem{definition}[theorem]{Definition}
\newtheorem{conjecture}[theorem]{Conjecture}
\newtheorem{question}[theorem]{Question}
\newtheorem{lemma}[theorem]{Lemma}
\newtheorem{proposition}[theorem]{Proposition}
\newtheorem{quest}[theorem]{Question}
\newtheorem{example}[theorem]{Example}
\newenvironment{proof}{\noindent {\bf
Proof.}}{\rule{2mm}{2mm}\par\medskip}
\newcommand{\remark}{\medskip\par\noindent {\bf Remark.~~}}
\newcommand{\pp}{{\it p.}}
\newcommand{\de}{\em}

\title{  {The classification of local $m$-GCI-group on finite nonabelian simple groups}
\thanks{  E-mail addresses:  xiaominzhu@sjtu.edu.cn (X. Zhu), xcubicy@163.com (X.
Yang, corresponding author).}}

\author{
Xiao-Min Zhu$^{a}$, Xu Yang$^{b}$  \\
{\small $^a$School of Mathematical Sciences, Shanghai Jiao Tong University,}\\
{\small 800 Dongchuan Rd, Shanghai, China.}\\
{\small $^b$School of Statistics and Mathematics, Shanghai Lixin University of Accounting and Finance,} \\
{\small 995 Shangchuan Rd, Shanghai, China. }\\
}

\maketitle

\vspace{-0.5cm}

\begin{abstract}
Li and Praeger classified  finite nonabelian simple groups, it has only one or two fusion classes of any certain value. As a by-product, they classified $m$-CI-groups, which is critical in the research of Cayley graphs.
In the paper, we will consider generalized Cayley graphs. This concept is proposed by Maru\v{s}i\v{c} et al. In the paper, (local) $m$-GCI-group is defined, and we get many properties and characterizations based on the generalized Cayley isomorphism, which are the key measures for the classification of (local) $m$-GCI-group. And above all, we will give a classification of local 2-GCI-groups and 2-GCI-groups for finite nonabelian simple groups.
\end{abstract}

{{\bf Key words:} fusion class; generalized Cayley graph; finite nonabelian simple groups; local 2-GCI-groups}

{{\bf MR(2010) Subject Classification:} 05C25}

\section{Introduction}

\ \ \quad
The graphs considered to be finite, simple and undirected, and groups
are nonabelian simple groups without special statement.
For finite group $G$, the conjugation is defined by $a^b=bab^{-1}$ for any $a,b\in G$. The full automorphism group is denoted by $\Aut(G)$ and $1$ is denoted to be the identity of $G$.
Elements $a$ and $b$ (rep. $b^{-1}$) are {\it fused} (rep. {\it inverse-fused}) if there is some automorphism mapping $a$ to $b$ (rep. $b^{-1}$).
Let $\mathcal{F}_a:=\{a^\gamma|\gamma\in \Aut(G)\}$. Then we call $\mathcal{F}_a$ the {\it fusion class} of $a$ of order $o(a)$.
Let $F_G(n)$ (rep. $C_G(n)$) be the number of fusion classes (rep. conjugacy classes) of order $n$. Similarly, we have $\mathcal{F}_H=\{H^\gamma|\gamma\in\Aut(G)\}$ for $H\leq G$.

Suppose that $P_e(n,i)\,({\rm rep.}\, P_s(n,i))$ denotes the property that all elements (rep. subgroups) of $G$ of order $n$ are divided into at most $i$ conjugacy classes in $G$ or fusion classes in $\Aut(G)$.
In history, the research of groups having $P_e(n,i)$ or $P_s(n,i)$ with small $i$
has been attracted much attention. Let $p$ be an odd prime. Shult \cite{EE} showed
that if $p$-group $G$ has $P_s(p,1)$ in $\Aut(G)$, then $G$ is homocyclic .
When $p=2$, the situation is complicated, Gross \cite{FG}, Higman \cite{GH}, and Shaw \cite{DS} gave a description of
$2$-groups $G$ that $G$ has $P_e(2,1)$ in $\Aut(G)$. For a $p$-solvable groups $G$, Gasch\"{u}tz and Yen \cite{WG-TY}
proved $G$ has $p$-length one if $G$ has $P_e(p,1)$ in $\Aut(G)$.
In \cite{WF-GMS}, Feit and Seitz solved an open problem \cite{note} they obtained
that $G$ is isomorphic to ${\rm S}_2$ or ${\rm S}_3$ if $G$ has $P_e(n,1)$ in $G$ for any $n\big{|}|G|$,
where ${\rm S_n}$ is a symmetric group of degree $n$.
In 1992, Zhang \cite{JZ} gave a crucial characterization of $G$ with $P_e(n,1)$
for any $n\big{|}|G|$ in $\Aut(G)$. In 1994, Li \cite{Lich} obtained a classification of $G$ if $G$ has
$P_e(n,2)$ for any $n\big{|}|G|$ in $G$. In
1996, Li and Praeger \cite{Lich-CP-1} researched $G$ which are simple and finite if $G$ has
$P_e(n,2)$ for any $n\big{|}|G|$ in $\Aut(G)$. In 1997,
they investigated the same problem for all finite groups and obtained classification \cite{Lich-CP-2}.


Let $X=\Cay(G,S)$, $S$ is a {\it Cayley subset}. Then $X$ has vertex-set $G$ and edge-set
$\{\{x,y\}|xy^{-1}\in S\}$. If $S$ is self-inverse, then $S=S^{-1}:=\{x^{-1}|x\in S\}$, it means $X$ is undirected. As is well-known, isomorphism problem is one of the fundamental problems for algebraic and combinatorial structure, the problem on Cayley graphs has attracted much attention in the history \cite{fyq,Lich-CP-1,Lich-CP-2,Li-CP1999,toida1977}.
Let $S_i$ be any self-inverse subset such that $|S_i|\leq m,\, (i=1,\,2)$. Let $X_i=\Cay(G,S_i)$. Then $G$ is a {\it $m$-CI-group} if $X_1\cong X_2$ and $S_1^\gamma=S_2$ are valid for some $\gamma\in\Aut(G)$. In \cite{toida1977}, Toida showed that
cyclic groups are 3-CI-groups. When $m\geq 4$, the authors \cite{Li-CP1999} investigated finite groups containing all $m$-CI-groups.
In \cite{Lich-CP-1}, they noticed the relationship between groups with a few fusion
classes and the $m$-CI-groups ($m\leq3$). As a result, they classified $m$-CI-groups ($m\leq3$) on
finite nonabelian simple groups.
In detail, they introduced:
$$\Omega(G,n)=\{\{x,x^{-1}\}|x\in G, o(x)=n\},$$
and solved questions below:
\begin{question}\label{ques1}
Let $G$ be simple and finite. Can we classify $G$ such that $G$ has
$P_e(n,2)$ in $\Aut(G)$ for any $n\big{|}|G|$ ?
\end{question}
\begin{question}\label{ques1-1}
Let $G$ be simple and finite. Can we classify $2$-CI-groups about $G$?
\end{question}

The above questions are the principal motivation of the paper.
In \cite{MSS1992}, generalized Cayley graph is proposed by Maru\v{s}i\v{c} et al.
\begin{definition}\cite{MSS1992}\label{D-G}
Let $G$ be finite and $\alpha\in\Aut(G)$.  Suppose that $S\subset G$. If $G,S,\alpha$ satisfy
\begin{enumerate}[(1).]
  \item $\alpha^2=1$;
  \item if $x\in G$, then $\alpha(x)x^{-1}\notin S$;
  \item if $\alpha(x^{-1})y\in S$, then $\alpha(y^{-1})x\in S$ for $x,y\in G$,
\end{enumerate}
then there exists generalized Cayley graph $X$ with vertex-set $G$ and edge-set $\{\{x,y\}|\alpha(x^{-1})y\in S\}$, denoted by $\GC(G,S,\alpha)$.
\end{definition}

By Definition \ref{D-G}, (2) certifies that the graph has no loops, (3) certifies the graph is
undirected and $\alpha(S)=S^{-1}$, where $S$ is called a
{\it generalized Cayley subset} with respect to $\alpha$.
More details about generalized Cayley graphs can refer to \cite{HKM2015,HKPT2017,H2019,MSS1992,YLCF,YLF}. Note that $\alpha^2=1$
by (1). If $\alpha=1$, then $\GC(G,S,\alpha)$ is $\Cay(G,S)$.

In \cite{YLF}, the authors proposed  GCI-groups and restricted GCI-groups (GCI stands for generalized Cayley isomorphism). Considering the contribution of $m$-CI-groups to CI-groups, here we propose the following concept:
\begin{definition}\label{mgci}
Suppose $\alpha_i\in\Aut(G)$, where $\alpha_i^2=1\,(i=1,2)$ for finite group $G$. Let $X_i=\GC(G,S_i,\alpha_i)$ be any two isomorphic graphs, where where $|S_i|\leq m$.
\begin{enumerate}[(1).]
  \item If there always exist $x\in G$ and $\gamma\in\Aut(G)$ such that $\alpha_1^\gamma(x)S_1^\gamma x^{-1}=S_2$ and $\alpha_2=\alpha_1^\gamma$,
  then we call $G$ a $m$-GCI-group;
  \item If $\alpha_i\,(i=1,2)$ are involutory automorphisms and the condition in (1) is satisfied, then we call
$G$ a local $m$-GCI-group.
\end{enumerate}
\end{definition}



According to Definition \ref{mgci}, if $G$ is $m$-GCI-group, then we can find group $G$ is local $m$-GCI-group.
However, the inverse is not necessarily true.
In order to classify $m$-GCI-groups and local $m$-GCI-groups, we propose two questions below:
\begin{question}\label{ques2}
Let $G$ be simple and finite. Can we classify $G$ such that some setwise stabilizer of some subset of $G$ in $\Aut(G)$ has $P_e(n,2)$ for any $n\big{|}|G|$ ?
\end{question}

\begin{question}\label{ques2-1}
Let $G$ be simple and finite. Can we classify local $2$-GCI-groups and $2$-GCI-groups on $G$?
\end{question}

Note that if the setwise stabilizer is trivial, then Question \ref{ques2} is actually a generalization of Question \ref{ques1}. Furthermore, Question \ref{ques2} is the key measures to Question \ref{ques2-1}, that is the classification of local $2$-GCI-groups and $2$-GCI-groups. The following theorem is one of the main result.

\begin{theorem}\label{main-re}
Let $G$ be simple and finite.
\begin{enumerate}
  \item $G$ is a local 2-GCI-group if and only if $G$ is one of $A_5$, $L_2(8)$, $Sz(8)$, $M_{11}$ or $M_{23}$;
  \item $G$ can not be a $m$-GCI-group ($m\geq 1$).
\end{enumerate}
\end{theorem}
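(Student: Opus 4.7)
The plan is to attack the two parts separately: part~(1) reduces the local $2$-GCI property to an $\Aut(G)$-fusion-class condition accessible to Li--Praeger, while part~(2) is handled by an isomorphism obstruction that separates the case $\alpha=1$ from $\alpha\neq 1$.

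For the necessity direction of part~(1), I fix an involutory $\alpha\in\Aut(G)$ and consider admissible subsets $S_i$ (singletons $\{s_i\}$ or inverse-pairs $\{s_i,s_i^{-1}\}$) with $o(s_1)=o(s_2)$. The generalized Cayley graphs $\GC(G,S_i,\alpha)$ share regularity and cycle data, hence are graph-isomorphic; the local $2$-GCI hypothesis then forces $s_1$ and $s_2$ into a common $\Aut(G)$-orbit modulo the centralizer of $\alpha$. Running this over all involutory $\alpha$ and all $n\mid|G|$ translates precisely to the $P_e(n,2)$-condition on $\Aut(G)$ of Question~\ref{ques2}; by \cite{Lich-CP-1} the simple groups satisfying this condition are exactly $A_5$, $L_2(8)$, $Sz(8)$, $M_{11}$, $M_{23}$. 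For sufficiency I would go through the five candidates case-by-case using the Atlas: each has $|\mathrm{Out}(G)|\leq 3$, involutory automorphisms falling into one or two $\Aut(G)$-classes, and at most two fusion classes per element order. Given $X_1\cong X_2$ with $|S_i|\leq 2$ and $\alpha_i$ involutory, I first conjugate $\alpha_1$ to $\alpha_2$ via some $\gamma\in\Aut(G)$, then use the fusion data together with the graph isomorphism to carry $S_1^\gamma$ onto $S_2$ by an additional inner shift by $x\in G$.

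For part~(2), the $m$-GCI condition requires $\alpha_2=\alpha_1^\gamma$, so if I exhibit isomorphic $X_1=\Cay(G,S_1)$ and $X_2=\GC(G,S_2,\alpha_2)$ with $\alpha_2$ involutory and nontrivial, the forced equation $\alpha_2=1^\gamma=1$ gives an immediate contradiction. For $A_5$, taking $\alpha_2$ to be conjugation by a transposition in $S_5$ and $S_2=\{c,\alpha_2(c^{-1})\}$ for a suitable $5$-cycle $c$, one checks the admissibility conditions directly and shows that $\GC(A_5,S_2,\alpha_2)$ decomposes into fifteen disjoint $4$-cycles, matching $\Cay(A_5,\{a,b\})$ for any two distinct commuting involutions $a,b\in A_5$ (since $\langle a,b\rangle\cong V_4$). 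Analogous constructions produce the required pair for each of $L_2(8)$, $Sz(8)$, $M_{11}$, $M_{23}$ using their outer or inner involutory automorphisms and the relevant $2$- and $p$-local structure. For simple $G$ outside the part~(1) list, failure of local $2$-GCI immediately implies failure of $m$-GCI for all $m\geq 2$.

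The main obstacle is the necessity step of part~(1): one must cleanly disentangle the coupled $(\alpha,S,x)$ isomorphism data of the local $2$-GCI hypothesis into a pure fusion-class condition on $\Aut(G)$ amenable to Li--Praeger. This is the genuinely new ingredient beyond the existing CI-group arguments, since one has to isolate the orbit structure of the involutory automorphism from that of the connection set, and to verify that all the graph-theoretic invariants one reads off match up with the $\Aut(G)$-orbit invariants.
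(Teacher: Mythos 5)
Your proposal has two genuine gaps. First, in the necessity direction of part (1) you assert that the fusion-class condition extracted from the local 2-GCI hypothesis, combined with \cite{Lich-CP-1}, yields exactly the five groups $A_5$, $L_2(8)$, $Sz(8)$, $M_{11}$, $M_{23}$. This is false: the reduction to fusion/orbit conditions (carried out in the paper as Lemma \ref{gci-necessary} together with Theorem \ref{main1}) produces \emph{eight} candidates, namely $A_5$, $L_2(7)$, $A_6$, $L_2(8)$, $M_{11}$, $L_3(4)$, $Sz(8)$ and $M_{23}$. The three extra groups satisfy the relevant $P_e(n,2)$-type condition and yet fail to be local 2-GCI-groups, and no purely fusion-theoretic argument eliminates them; the paper excludes them by explicit (Magma-assisted) counterexamples: for $A_6$, an outer involutory automorphism $\beta$ with $G_{-\beta}\supsetneq\omega_\beta(G)$ already yields a valency-one generalized Cayley graph with no admissible partner, so $A_6$ fails even at the 1-GCI level (Lemma \ref{A6}); for $L_2(7)$ and $L_3(4)$ one exhibits isomorphic $2$-valent generalized Cayley graphs attached to \emph{non-conjugate} involutory automorphisms (Lemmas \ref{L27} and \ref{L34}), so the requirement $\alpha_2=\alpha_1^\gamma$ cannot be met. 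Your proposal contains no mechanism for ruling out these three groups, so the forward implication of part (1) is not established.

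Second, your treatment of part (2) only covers $m\geq 2$: the counterexamples you construct use connection sets of size two, and your closing remark that failure of local 2-GCI implies failure of $m$-GCI for all $m\geq 2$ explicitly leaves $m=1$ untouched, whereas the theorem asserts that no finite nonabelian simple group is even a 1-GCI-group. The paper's argument (Theorem \ref{results-1}) is both shorter and complete: for any involution $x\in G$ one has the two perfect matchings $\GC(G,\{x\},1)\cong\frac{|G|}{2}K_2\cong\GC(G,\{x\},\sigma_x)$, and since $G$ is centerless $\sigma_x\neq 1$ is not conjugate to the identity, so the required condition $\alpha_2=\alpha_1^\gamma$ fails; this kills the 1-GCI property, hence the $m$-GCI property for every $m\geq 1$, uniformly for all finite nonabelian simple groups, and renders your group-by-group constructions (whose details for $L_2(8)$, $Sz(8)$, $M_{11}$, $M_{23}$ you in any case only gesture at) unnecessary.
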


We will construct the paper below. In the next section, some notaions and `fusion class' with respect to generalized Cayley graphs will be given. In Section \ref{answer1}, we will give a proof of Question \ref{ques2}. In Section \ref{example}, some examples of  groups which are not local $2$-GCI-groups are obtained. In Section \ref{answer2}, Theorem \ref{main-re} will be proved, which is the complete classification of local $2$-GCI-groups and $2$-GCI-groups on finite nonabelian simple groups.

\section{Preliminaries}\label{pre}
Let $\alpha\in\Aut(G)$ be an involution. Following notations from \cite{HKM2015,MSS1992,PS}, we write
$$G_{\pm\alpha}:=\{x\in G|x^\alpha=x^{\pm1}\}.$$
Moreover, $\Aut(G)_x=\{\gamma\in\Aut(G)|x^\gamma=x\}$ where $x\in G$.
Let $\omega_\alpha(G)=\{\alpha(x)x^{-1}|x\in G\}$. If $\alpha$ is inner, like $\alpha=\sigma_x: g\mapsto xgx$ for some $x\in G$, then we denote $\omega_\alpha(G)$ by $\omega_x(G)$, and $\omega_x(G)=\{\sigma_x(g)g^{-1}|g\in G\}$.
In fact, we can find that $\omega_\alpha(G)$ and $G_{-\alpha}$ are self-inverse, and $\omega_\alpha(G)\sbs G_{-\alpha}$ by \cite{YLF}.
Let $\omega^*_x(G)=\{\{xg, g^{-1}x^{-1}\}|g\in \omega_\alpha(G)\}$.
Then $\omega^*_x(G)$ is the conjugacy class of $x$. Then we have a property as follows:

\begin{lemma}\label{+1}
Let $C_G(2)=1$ for a finite group $G$. Then $|G_{-\alpha}|=|\omega_\alpha(G)|+1$ for any involutory automorphism $\alpha$ induced by an involution.
\end{lemma}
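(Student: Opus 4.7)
The plan is to reduce both quantities to counts involving involutions of $G$, by writing $\alpha$ as conjugation by the inducing involution $t\in G$ (so $t^2=1$ and $\alpha(g)=tgt$) and then factoring out $t$ on the left. Let $I(G)$ denote the set of (non-identity) involutions of $G$.

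For $|G_{-\alpha}|$, observe that $txt=x^{-1}$ is equivalent to $(tx)(tx)=1$, i.e. $tx$ has order dividing $2$. Since left-multiplication by $t$ is a bijection on $G$, this immediately yields $|G_{-\alpha}|=|\{y\in G : y^{2}=1\}|=1+|I(G)|$, the $+1$ coming from $y=1$ (equivalently $x=t$).

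For $|\omega_\alpha(G)|$, factor $\alpha(x)x^{-1}=txtx^{-1}=t\cdot(xtx^{-1})$. As $x$ runs over $G$, the element $xtx^{-1}$ runs over the conjugacy class $t^{G}$, and left-multiplication by $t$ is injective, so $|\omega_\alpha(G)|=|t^{G}|$. This is exactly where the hypothesis $C_G(2)=1$ enters: there is only one $G$-conjugacy class of order-$2$ elements, so $t^{G}=I(G)$, giving $|\omega_\alpha(G)|=|I(G)|$. Subtracting the two counts produces $|G_{-\alpha}|=|\omega_\alpha(G)|+1$, as required.

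There is no genuine obstacle here; the argument is bookkeeping once the reformulations $y=tx$ and $s=xtx^{-1}$ are in place. The only point to watch is that $\omega_\alpha(G)$ is a set rather than a multiset, so one must note that $x\mapsto xtx^{-1}$ hitting $t^{G}$ with multiplicities is irrelevant for the cardinality; this is automatic from the set-builder definition.
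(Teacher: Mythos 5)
Your argument is correct and is essentially the paper's own proof: both reduce $G_{-\alpha}$ to $\{1\}\cup\{\text{involutions}\}$ via the bijection $x\mapsto tx$, and both identify $|\omega_\alpha(G)|$ with the size of the conjugacy class of $t$ (you do this by the factorization $\alpha(x)x^{-1}=t\,(xtx^{-1})$, the paper via $|\omega_\alpha(G)|=|G|/|C_G(t)|$ — the same count), with $C_G(2)=1$ supplying the identification of that class with all involutions. Your version is, if anything, slightly more self-contained, since the factorization makes the equality $\omega_\alpha(G)=t\cdot t^{G}$ explicit rather than appealing to the coset formula.
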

\begin{proof}
Let $\alpha=\sigma_x$. Then $G_{-\alpha}=\{g\in G|(xg)^2=1\}$. It follows that $xg=1$ or $xg$ is an involution. Therefore, $|G_{-\alpha}|=1+|C_{xg}|=1+|C_{x}|$, where $C_{x}$ is the conjugacy class of $xg$. Note that $|\omega_\alpha(G)|=\frac{|G|}{|G_\alpha|}$, if $\alpha=\sigma_x$, then
$|\omega_\alpha(G)|=\frac{|G|}{|C_G(x)|}=|C_x|$. This completes the proof.
\end{proof}

By (2) in Definition \ref{D-G}, we define the following multi-subset,
$$\Pi_\alpha(G)= \{\{x,\alpha(x^{-1})\}|x\in G, x\notin \omega_\alpha(G)\}.$$
If $S\subset G$ is a generalized Cayley subset, then it is actually the union of some subsets in $\Pi_\alpha(G)$. Let $x\in G$. Then equation $h=x\cdot g$ always has a solution in $G$. Intend to highlight $x$, we can denote $\Omega(G,n)$ for $\Omega_x(G,n)$ and have:
$$\Omega_x(G,n)=\{\{xg,g^{-1}x^{-1}\}|g\in G, o(xg)=n\}.$$
In \cite{Lich-CP-1}, the authors got the following theorems:


When $\alpha=\sigma_x$ for some $x\in G$, we also denote $\Omega_x(G,n)$ by $\Omega_\alpha(G,n)$ and denote $\Omega_x^*(G,n)$ by $\Omega_\alpha^*(G,n)$. In detail, $\Omega_x^*(G,n)$ is defined as follows:
\begin{equation}\label{equation-doubi}
\Omega_x^*(G,n)=\left\{\begin{array}{ll}
                          \Omega_x(G,n), & n\neq 2, \\
                          \Omega_x(G,2)-\omega^*_x(G), & n=2.
                        \end{array}\right.
\end{equation}

Let $\GC(G,S_1,\alpha_1)\cong\GC(G,S_2,\alpha_2)$. If it is a generalized Cayley isomorphism, then we have
$\alpha_1^\gamma(x)S_1^\gamma x^{-1}=S_2$ and $\alpha_1^\gamma=\alpha_2$ for some $\gamma\in\Aut(G)$ and $x\in G$. Hence, there must exist
$a\in S_1$ and $b\in S_2$ satisfying $\alpha_1(x)a^\gamma x^{-1}=b$. Thus we get the following definition similar to the concept of fused and inverse-fused.

\begin{definition}\label{qfiqf}
Let $\alpha\in\Aut(G)$ be an involution. For $a,b\in G$,
if there exists $x\in G$ such that  $b=\alpha(x)a^\gamma x^{-1}$ (rep. $b=\alpha(x)\alpha(a^{-1})^\gamma x^{-1}$) for some $\gamma\in \Aut(G)_\alpha$, then $a,b$ are quasi-fused (rep. inverse-quasi-fused).
\end{definition}

Let $\ell_\alpha=G\rtimes \Aut(G)_\alpha$ be the semi-direct product. The element of $\ell_\alpha$ is denoted to be $(g,\gamma)$, where $\gamma\in \Aut(G)_\alpha$ and $g\in G$. The binary operation is defined as follows:
$$(g_1,\gamma_1)(g_2,\gamma_2):=(g_1\gamma_1(g_2),\gamma_1\gamma_2).$$
Let $a^{(g,\gamma)}:=\alpha(g)a^\gamma g^{-1}$. Then $\ell_\alpha$ indeed has an action on $G$ where
$$a^{(g_1,\gamma_1)(g_2,\gamma_2)}
= \alpha(g_1\gamma_1(g_2))a^{\gamma_1\gamma_2}\gamma_1(g_2^{-1})g_1^{-1}=a^{(g_1\gamma_1(g_2),\gamma_1\gamma_2)}.$$
We have a corollary as follows:

\begin{corollary}\label{abab}
Let $\alpha\in\Aut(G)$ be an involution. For $a,b\in G$,
then $\alpha(b^{-1})=\alpha(a^{-1})^{(g,\gamma)}$ if and only if $b=a^{(g,\gamma)}$, and
$b=\alpha(a^{-1})^{(g,\gamma)}$ if and only if $\alpha(b^{-1})=a^{(g,\gamma)}$.
That is, $\{a,\alpha(a^{-1})\}^{(g,\gamma)}=\{b,\alpha(b^{-1})\}$.
\end{corollary}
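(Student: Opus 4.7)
The plan is to reduce both biconditionals, and the set identity that unifies them, to a single equivariance statement for the involution $\tau \colon G \to G$ defined by $\tau(x) := \alpha(x^{-1})$. Since $\alpha^2 = 1$, a direct check gives $\tau(\tau(x)) = \alpha(\alpha(x^{-1})^{-1}) = \alpha(\alpha(x)) = x$, so $\tau$ is a set-theoretic involution of $G$, and the unordered pair $\{a,\alpha(a^{-1})\}$ equals $\{a,\tau(a)\}$.

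The main calculation is to verify that $\tau$ intertwines the $\ell_\alpha$-action, i.e.\ $\tau(a^{(g,\gamma)}) = \tau(a)^{(g,\gamma)}$. Unfolding the definition,
\[
\tau(a^{(g,\gamma)}) = \alpha\bigl((\alpha(g)\, a^\gamma\, g^{-1})^{-1}\bigr) = \alpha(g)\,\alpha((a^\gamma)^{-1})\,\alpha^2(g^{-1}) = \alpha(g)\,\alpha((a^\gamma)^{-1})\,g^{-1},
\]
using $\alpha^2 = 1$. The remaining task is to identify $\alpha((a^\gamma)^{-1})$ with $\alpha(a^{-1})^\gamma$, which is precisely the statement that $\alpha$ and $\gamma$ commute as automorphisms of $G$; this is exactly the content of $\gamma \in \Aut(G)_\alpha$. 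Substituting gives $\tau(a^{(g,\gamma)}) = \alpha(g)\,\alpha(a^{-1})^\gamma\,g^{-1} = \tau(a)^{(g,\gamma)}$, as required.

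With equivariance established, the two biconditionals are formal. For the first, $b = a^{(g,\gamma)}$ is equivalent (apply the bijection $\tau$) to $\tau(b) = \tau(a^{(g,\gamma)}) = \tau(a)^{(g,\gamma)}$, i.e.\ $\alpha(b^{-1}) = \alpha(a^{-1})^{(g,\gamma)}$. The second biconditional follows by replacing $a$ with $\tau(a)$ in the first and using $\tau^2 = \mathrm{id}$. Finally, $\{a,\alpha(a^{-1})\}^{(g,\gamma)} = \{a^{(g,\gamma)}, \tau(a)^{(g,\gamma)}\} = \{b,\tau(b)\} = \{b,\alpha(b^{-1})\}$ whenever either biconditional holds, giving the asserted set identity.

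There is no real obstacle beyond careful bookkeeping; the only substantive input is the commutation $\alpha\gamma = \gamma\alpha$, and this is precisely why the semidirect product defining $\ell_\alpha$ is formed with $\Aut(G)_\alpha$ rather than all of $\Aut(G)$. I would flag this point explicitly in the writeup so the reader sees where the hypothesis on $\gamma$ is used.
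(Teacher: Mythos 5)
Your proof is correct, and it supplies exactly the computation the paper leaves implicit: the paper states this corollary without proof, treating it as immediate from the definition of the action $a^{(g,\gamma)}=\alpha(g)a^\gamma g^{-1}$. Your equivariance identity $\tau(a^{(g,\gamma)})=\tau(a)^{(g,\gamma)}$ for $\tau(x)=\alpha(x^{-1})$, resting on $\alpha^2=1$ and the commutation $\alpha\gamma=\gamma\alpha$ coming from $\gamma\in\Aut(G)_\alpha$, is the natural (and essentially the only) way to verify the claim, and your remark that this is precisely why $\ell_\alpha$ is built from $\Aut(G)_\alpha$ rather than all of $\Aut(G)$ is a worthwhile addition.
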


Then we have another revision of Definition \ref{qfiqf} as follows.
\begin{definition}
Let $\alpha\in\Aut(G)$ be an involution. For $a,b\in G$,
then $a,b$ are quasi-fused (rep. inverse-quasi-fused) if
$a^{(g,\gamma)}=b$ (rep. $\alpha(a^{-1})^{(g,\gamma)}=b$) for some $(g,\gamma)\in\ell_\alpha$.
\end{definition}

\begin{remark}
If $\alpha=1$ in Definition \ref{qfiqf}, then $b=ga^\gamma g^{-1}$ or $b=g(a^{-1})^\gamma g^{-1}$.
Thus the concept of quasi-fused and inverse-quasi-fused is equivalent to fused and inverse-fused, respectively.
It implies Definition \ref{qfiqf} is a generalization of fused and inverse-fused.
\end{remark}

Let $\wi{F}_\alpha(a)=\{\alpha(g)a^\gamma g^{-1}|g\in G,\gamma\in\Aut(G)_\alpha\}$ be the {\it quasi-fusion class}
with respect to $a$. If $\alpha$ is obvious, $\wi{F}_\alpha(a)$ is short for $\wi{F}(a)$, then $\omega_\alpha(G)=\wi{F}(1)$. The following properties will be displayed.

\begin{proposition}
Let $a,b\in G$ and $\alpha\in\Aut(G)$ be an involution.
\begin{enumerate}[(i).]
  \item If $\wi{F}_\alpha(a)\bigcap\wi{F}_\alpha(b)\neq\emptyset$, then they are equal.
  \item If $\alpha=\sigma_a$, then $\wi{F}_\alpha(a)=\{a\}$ and $\wi{F}_\alpha(x)=\wi{F}_\alpha(1)$, where $a\neq x\in G_\alpha$.
\end{enumerate}
\end{proposition}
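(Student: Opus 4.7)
The proof rests on recognizing $\wi{F}_\alpha(a)$ as the equivalence class of $a$ under the relation $a\sim b\Leftrightarrow b=\alpha(g)a^\gamma g^{-1}$ for some $(g,\gamma)\in\ell_\alpha$. I would first verify this is an equivalence relation: reflexivity via $(g,\gamma)=(1,1)$; symmetry by solving for $a$ in the defining equation and using that every $\gamma\in\Aut(G)_\alpha$ commutes with $\alpha$; transitivity by composing two such expressions and collecting terms into the product $(g_1\gamma_1(g_2),\gamma_1\gamma_2)\in\ell_\alpha$ recorded in the semi-direct product formula just above the proposition. Since equivalence classes partition $G$, claim (i) is immediate.

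For claim (ii), assume $\alpha=\sigma_a$. Because $\alpha^2=1$ and $G$ is nonabelian simple, $a^2\in Z(G)=1$, so $a$ is an involution. The identity $\gamma\sigma_a\gamma^{-1}=\sigma_{\gamma(a)}$ combined with $Z(G)=1$ forces $\Aut(G)_\alpha=\{\gamma\in\Aut(G):\gamma(a)=a\}$, so $a^\gamma=a$ for every admissible $\gamma$. A direct computation then gives
\[
a^{(g,\gamma)}=\alpha(g)\,a\,g^{-1}=(aga^{-1})\cdot a\cdot g^{-1}=a,
\]
so $a$ is a fixed point of the $\ell_\alpha$-action and $\wi{F}_\alpha(a)=\{a\}$.

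For the remaining assertion, my plan is to exhibit $x\in\wi{F}_\alpha(1)=\omega_\alpha(G)$ and then invoke (i) to conclude $\wi{F}_\alpha(x)=\wi{F}_\alpha(1)$. Since $\omega_\alpha(G)\sbs G_{-\alpha}$ and Lemma \ref{+1} gives $|G_{-\alpha}|=|\omega_\alpha(G)|+1$, the single missing element must be $a$ itself (which lies in $G_{-\alpha}$ because $a=a^{-1}$, but cannot be written as $\alpha(g)g^{-1}$), so $\omega_\alpha(G)=G_{-\alpha}\setminus\{a\}$. After rewriting $\omega_\alpha(G)=\{a\cdot a^h:h\in G\}$, coming from $\alpha(h)h^{-1}=aha^{-1}h^{-1}=a\cdot a^{h^{-1}}$, the task reduces to showing that $ax$ is a $G$-conjugate of $a$ for every $a\neq x\in G_\alpha$.

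I expect the main obstacle to be this final step: an element $x\in G_\alpha=C_G(a)$ need not satisfy $x^2=1$, so the inclusion $G_\alpha\setminus\{a\}\sbs G_{-\alpha}$ needed for the reduction holds only in the regime where $x$ is an involution. In that regime $ax$ commutes with $a$ and is itself an involution, and the single-conjugacy-class-of-involutions feature used throughout the paper then supplies a conjugator $h\in G$ with $a^h=ax$; combined with (i), this completes the proof.
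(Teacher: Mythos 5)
The paper states this proposition with no proof at all, so there is nothing to compare your argument against; I can only assess it on its own terms. Your treatment of (i) is correct and is surely what was intended: $\wi{F}_\alpha(a)$ is the $\ell_\alpha$-orbit of $a$ under the action $a^{(g,\gamma)}=\alpha(g)a^\gamma g^{-1}$ that the paper verifies just before the proposition, and orbits partition $G$. Your proof of the first assertion of (ii) is also complete: $\alpha^2=1$ and $Z(G)=1$ force $a^2=1$ and $\Aut(G)_\alpha=\Aut(G)_a$, whence $a^{(g,\gamma)}=(aga^{-1})\,a\,g^{-1}=a$.

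The obstacle you flag in the last assertion of (ii) is a genuine gap, and you should not expect to close it, because the assertion is false as written. By (i), $\wi{F}_\alpha(x)=\wi{F}_\alpha(1)$ is equivalent to $x\in\omega_\alpha(G)=\{a\cdot a^h\mid h\in G\}$, i.e.\ to $ax$ being conjugate to $a$ (the factorization $x=a\cdot(ax)$ is unique). Take $G=A_6$, $a=(1\,2)(3\,4)$ and $x=(1\,3\,2\,4)(5\,6)\in G_\alpha=C_G(a)$, an element of order $4$ with $x^2=a$. Then $(ax)^2=x^2=a\neq 1$, so $ax$ has order $4$ and cannot be conjugate to the involution $a$; the same holds for every $x^\gamma$ with $\gamma\in\Aut(G)_\alpha$, since $x^\gamma$ still lies in $C_G(a)$, has order $4$ and squares to $a$. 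Hence $\wi{F}_\alpha(x)\cap\wi{F}_\alpha(1)=\emptyset$. Your diagnosis is exactly right: the claim needs $x$ to be an involution (so that $ax$ is one) together with $ax$ conjugate to $a$ --- automatic when $C_G(2)=1$, the regime in which the paper ultimately works --- and under those extra hypotheses your argument does go through. Two minor cautions: Lemma \ref{+1} assumes $C_G(2)=1$, so your identification $\omega_\alpha(G)=G_{-\alpha}\setminus\{a\}$ is only available under that hypothesis; and it is not actually needed once you observe directly that $\omega_\alpha(G)=a\cdot a^G$.
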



\begin{proposition}\label{con}
$\wi{F}_\alpha(a)^\delta=\wi{F}_\beta(a^\delta)$ with $\alpha^\delta=\beta$.
\end{proposition}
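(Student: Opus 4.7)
The plan is to unwind both sides of the claimed equality using only the definition of $\wi{F}_\alpha(a)$ and the compatibility of automorphism composition. First I would compute how $\delta$ acts on a generic element of $\wi{F}_\alpha(a)$: for $g\in G$ and $\gamma\in\Aut(G)_\alpha$,
\[
\delta\bigl(\alpha(g)\,a^\gamma\,g^{-1}\bigr)=\delta(\alpha(g))\cdot\delta(\gamma(a))\cdot\delta(g)^{-1},
\]
since $\delta$ is an automorphism. Then I would rewrite the two inner pieces: the identity $\alpha^\delta=\beta$ means $\delta\alpha\delta^{-1}=\beta$, hence $\delta\alpha=\beta\delta$, giving $\delta(\alpha(g))=\beta(\delta(g))$; similarly $\delta\gamma=\gamma^\delta\delta$, so $\delta(\gamma(a))=\gamma^\delta(\delta(a))=(a^\delta)^{\gamma^\delta}$. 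Together these produce
\[
\delta\bigl(\alpha(g)\,a^\gamma\,g^{-1}\bigr)=\beta(\delta(g))\,(a^\delta)^{\gamma^\delta}\,\delta(g)^{-1}.
\]

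Next I would perform the change of variables $h:=\delta(g)$ and $\eta:=\gamma^\delta$. Because $\delta$ is a bijection on $G$, $h$ ranges over all of $G$ as $g$ does. Because conjugation by $\delta$ is an automorphism of $\Aut(G)$ that sends $\alpha$ to $\beta$, it maps the centralizer of $\alpha$ bijectively onto the centralizer of $\beta$; interpreting $\Aut(G)_\alpha$ as $C_{\Aut(G)}(\alpha)$ (the convention implicit in the construction of $\ell_\alpha$), we therefore have $\eta$ ranging exactly over $\Aut(G)_\beta$ as $\gamma$ ranges over $\Aut(G)_\alpha$. This rewrites $\wi{F}_\alpha(a)^\delta$ as $\{\beta(h)(a^\delta)^\eta h^{-1}\mid h\in G,\ \eta\in\Aut(G)_\beta\}$, which is precisely $\wi{F}_\beta(a^\delta)$.

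There is no real obstacle here; the argument is a formal compatibility check. The only point that warrants care is confirming that conjugation by $\delta$ restricts to a bijection $\Aut(G)_\alpha\to\Aut(G)_\beta$, which is automatic once one reads $\Aut(G)_\alpha$ as the centralizer of $\alpha$ in $\Aut(G)$ (consistent with the semidirect-product definition of $\ell_\alpha$). Everything else is the bookkeeping of pushing $\delta$ through an automorphism and relabelling the indexing variables.
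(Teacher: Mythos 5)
Your proof is correct and follows essentially the same route as the paper's: both push the generic element $\alpha(g)a^{\gamma}g^{-1}$ through $\delta$ using $\delta\alpha=\beta\delta$ and the fact that conjugation by $\delta$ carries $\Aut(G)_\alpha$ bijectively onto $\Aut(G)_\beta$ (the paper runs the computation from the $\wi{F}_\beta(a^\delta)$ side and appeals to ``vice versa,'' while you phrase it as a change of variables, but the content is identical). No gaps.
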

\begin{proof}
If $x\in\wi{F}_\beta(a^\delta)$, then $x=\beta(g)(a^\delta)^\theta g^{-1}$, where $\theta\in\Aut(G)_\beta$.
It follows that $x=\alpha^\delta(g)a^{\theta\delta}g^{-1}=\delta\alpha\delta^{-1}(g)a^{\theta\delta}\delta\delta^{-1}(g^{-1})=
\delta(\alpha\delta^{-1}(g)a^{\delta^{-1}\theta\delta}\delta^{-1}(g^{-1}))$. Since $\theta\in\Aut(G)_\beta$, then
$\delta\alpha\delta^{-1}\theta=\theta\delta\alpha\delta^{-1}$, which implies $\alpha\delta^{-1}\theta\delta=\delta^{-1}\theta\delta\alpha$,
thus $\delta^{-1}\theta\delta\in\Aut(G)_\alpha$. Thus $\alpha\delta^{-1}(g)a^{\delta^{-1}\theta\delta}\delta^{-1}(g^{-1})\in\wi{F}_\alpha(a)$ and $x\in\wi{F}_\alpha(a)^\delta$, vice versa.
\end{proof}


According to Definition \ref{qfiqf}, we find it is still fuzzy for us to see the relation between two elements $a,b$ if they are quasi-fused or inverse-quasi-fused. It is indeed a trouble when we deal with the isomorphism problems of
generalized Cayley graphs. Fortunately, we find a clearer relationship
in some special cases.

\begin{theorem}\label{transform}
$\{b,\alpha(b^{-1})\}=\{a,\alpha(a^{-1})\}^{(g,\gamma)}$ if and only if $\sigma_g\gamma$ transforms $\{xa,a^{-1}x\}$ to $\{xb,b^{-1}x\}$, where $\alpha=\sigma_x$ and $(g,\gamma)\in\ell_\alpha$.
\end{theorem}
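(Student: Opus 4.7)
The plan is to unwind both sides into elementwise identities and then verify the equivalence by direct algebraic manipulation, exploiting the fact that $\alpha=\sigma_x$ with $x$ an involution. Before the main calculation I would record two small preparatory observations: first, because $\alpha^2=1$ and $\alpha=\sigma_x$ the element $x$ must satisfy $x^2=1$, so $\alpha(y)=xyx=xyx^{-1}$ for all $y\in G$; second, because $\gamma\in\Aut(G)_\alpha$ commutes with $\alpha=\sigma_x$, we have $\sigma_{\gamma(x)}=\sigma_x$, and in the nonabelian simple group context of this paper, where $Z(G)=1$, this forces $\gamma(x)=x$. These two identities drive every subsequent simplification.

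For the forward direction, I would invoke Corollary \ref{abab} to split the hypothesis $\{a,\alpha(a^{-1})\}^{(g,\gamma)}=\{b,\alpha(b^{-1})\}$ into two cases: (i) $b=a^{(g,\gamma)}$, or (ii) $b=\alpha(a^{-1})^{(g,\gamma)}$. In case (i), expanding gives $b=xgx\cdot a^\gamma\cdot g^{-1}$, and multiplying on the left by $x$ and collapsing $x^2=1$ yields $xb=gxa^\gamma g^{-1}=\sigma_g\gamma(xa)$; inverting and multiplying on the right by $x$ similarly gives $b^{-1}x=g(a^\gamma)^{-1}xg^{-1}=\sigma_g\gamma(a^{-1}x)$. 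In case (ii), using that $\gamma$ commutes with $\alpha$ to rewrite $\alpha(a^{-1})^\gamma=x(a^\gamma)^{-1}x$, the analogous computation produces $xb=\sigma_g\gamma(a^{-1}x)$ and $b^{-1}x=\sigma_g\gamma(xa)$, i.e.\ the same two images but with the roles of $xa$ and $a^{-1}x$ interchanged. In either case, $\sigma_g\gamma$ sends the pair $\{xa,a^{-1}x\}$ onto the pair $\{xb,b^{-1}x\}$.

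The converse direction simply runs the same computation backwards. From $\sigma_g\gamma(\{xa,a^{-1}x\})=\{xb,b^{-1}x\}$ there are two possible matchings of elements, which correspond exactly to cases (i) and (ii) above; multiplying each matching on the left by $x$ and once more invoking $x^2=1$ together with $\gamma(x)=x$ recovers $b=a^{(g,\gamma)}$ or $b=\alpha(a^{-1})^{(g,\gamma)}$, and Corollary \ref{abab} then reassembles this into $\{b,\alpha(b^{-1})\}=\{a,\alpha(a^{-1})\}^{(g,\gamma)}$.

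The only real subtlety, and what I would flag as the main obstacle, is the bookkeeping between the two matchings together with the careful justification of $\gamma(x)=x$; the underlying algebra is otherwise routine, amounting to repeatedly inserting or cancelling $x^2=1$ in four short expressions and tracking which element of each pair lands on which.
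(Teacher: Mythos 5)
Your proposal is correct and follows essentially the same route as the paper: both split via Corollary \ref{abab} into the two cases $b=a^{(g,\gamma)}$ and $b=\alpha(a^{-1})^{(g,\gamma)}$, then left-multiply by $x$ and use $x^2=1$ to exhibit $\sigma_g\gamma$ carrying $\{xa,a^{-1}x\}$ to $\{xb,b^{-1}x\}$, with the converse obtained by reversing the same chain of equivalences. Your explicit justification that $\gamma(x)=x$ (via $Z(G)=1$) is a welcome detail the paper leaves implicit when it moves $x^{-1}$ inside $\gamma$.
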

\begin{proof}
Let $\{b,\alpha(b^{-1})\}=\{a,\alpha(a^{-1})\}^{(g,\gamma)}$. According to Corollary \ref{abab}, then we have $a^{(g,\gamma)}=b$ or $\alpha(a^{-1})^{(g,\gamma)}=b$.

On the one hand, if $a^{(g,\gamma)}=b$ for some $(g,\gamma)\in\ell_\alpha$,
then $b=xgx^{-1}a^\gamma g^{-1}$. We have
\begin{eqnarray}
 && xb=gx^{-1}a^\gamma g^{-1} \\
 &\Leftrightarrow& xb=g(x^{-1}a)^\gamma g^{-1}\\
 &\Leftrightarrow& xb=(xa)^{\gamma'}
\end{eqnarray}
with $\gamma'=\sigma_g\gamma$, which implies that $xb$ and $xa$ are conjugate.
Similarly, we have $x\alpha(b^{-1})$ and $x\alpha(a^{-1})$, that is $b^{-1}x$ and $a^{-1}x$,
are also conjugate in $\Aut(G)$ with the same $\gamma'$.

On the other hand, if $b=\alpha(a^{-1})^{(g,\gamma)}$ for some $(g,\gamma)\in\ell_\alpha$,
then $b=xgx^{-1}\alpha(a^{-1})^\gamma g^{-1}$. We have
\begin{eqnarray}
 && xb=g(a^{-1})^\gamma x^{-1} g^{-1}  \\
 &\Leftrightarrow& xb=g(a^{-1}x)^\gamma g^{-1}\\
 &\Leftrightarrow& xb=(a^{-1}x)^{\gamma'}
\end{eqnarray}
with $\gamma'=\sigma_g\gamma$, which implies that $xb$ and $a^{-1}x$ are conjugate.
Similarly, we have $x\alpha(b^{-1})$ and $xa$, that is $b^{-1}x$ and $xa$, are also conjugate in $\Aut(G)$ with the same $\gamma'$.

Therefore, we can find that  $\Aut(G)$ transforms $\{xa,a^{-1}x\}$ to $\{xb,b^{-1}x\}$ no matter which case happens above.
The inverse is clear by (2)--(7) as above.
\end{proof}

\begin{corollary}
If $\alpha(a^{-1})=a, \alpha(b^{-1})=b, \alpha(c^{-1})=c$ and $\alpha(d^{-1})=d$, then $\{c,d\}=\{a,b\}^{(g,\gamma)}$ if and only if $\sigma_g\gamma$ transforms $\{xa,xb\}$ to $\{xc,xd\}$, where $\alpha=\sigma_x$ and $(g,\gamma)\in\ell_\alpha$.
\end{corollary}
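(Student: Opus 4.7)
The plan is to reduce the corollary to two element-wise applications of Theorem \ref{transform}. The crucial observation that makes the reduction clean is that the hypothesis $\alpha(a^{-1})=a$ collapses the two-element multiset $\{xa,a^{-1}x\}$ appearing in Theorem \ref{transform} to a singleton. Indeed, since $\alpha=\sigma_x$ with $x^2=1$, the equation $\alpha(a^{-1})=a$ reads $xa^{-1}x=a$, which after multiplying by $x$ on the right and using $x^2=1$ gives $xa^{-1}=ax$, equivalently $a^{-1}x=(ax)^{-1}=xa$. Hence $\{xa,a^{-1}x\}=\{xa\}$, and the same identity holds for $b,c,d$.

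First I would record this consequence and then restate Theorem \ref{transform} in the degenerate case: under the hypothesis $\alpha(a^{-1})=a$ and $\alpha(c^{-1})=c$, the singleton set $\{c\}$ equals $\{a\}^{(g,\gamma)}$ precisely when $\sigma_g\gamma$ sends $xa$ to $xc$. In particular, $a^{(g,\gamma)}=c$ if and only if $(xa)^{\sigma_g\gamma}=xc$; the same biconditional holds with the pair $(b,d)$ in place of $(a,c)$.

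Next I would unpack the hypothesis $\{c,d\}=\{a,b\}^{(g,\gamma)}$ element-wise. Interpreted as equality of (multi)sets under the action of $(g,\gamma)$, this means either
\[
c=a^{(g,\gamma)}\ \text{and}\ d=b^{(g,\gamma)},\qquad\text{or}\qquad d=a^{(g,\gamma)}\ \text{and}\ c=b^{(g,\gamma)}.
\]
Applying the element-wise translation established in the previous paragraph, the first alternative converts to $(xa)^{\sigma_g\gamma}=xc$ and $(xb)^{\sigma_g\gamma}=xd$, while the second converts to $(xa)^{\sigma_g\gamma}=xd$ and $(xb)^{\sigma_g\gamma}=xc$. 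In either matching, $\sigma_g\gamma$ sends $\{xa,xb\}$ onto $\{xc,xd\}$.

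For the converse I would run the same argument backwards: if $\sigma_g\gamma$ carries $\{xa,xb\}$ onto $\{xc,xd\}$, then it realises one of the two possible matchings, and applying Theorem \ref{transform} to each element separately returns one of the two expressions for $\{c,d\}=\{a,b\}^{(g,\gamma)}$ listed above. There is no real obstacle beyond bookkeeping; the only subtlety is to keep the two matchings straight and to treat $\{a,b\}$ and $\{c,d\}$ as multisets so that the argument remains valid in the degenerate cases $a=b$ or $c=d$.
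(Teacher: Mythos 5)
Your argument is correct and coincides with the paper's (implicit) proof: the paper states this corollary without proof, having just recorded the key identity that $\alpha(a^{-1})=a$ forces $xa=a^{-1}x$, so each pair $\{a,\alpha(a^{-1})\}$ in Theorem \ref{transform} degenerates to a singleton and the claim follows by applying that theorem elementwise to the two possible matchings, exactly as you do. One small algebraic slip: the chain $a^{-1}x=(ax)^{-1}=xa$ is wrong as written since $(ax)^{-1}=xa^{-1}$; instead obtain $a^{-1}x=xa$ directly by left-multiplying $xa^{-1}x=a$ by $x$ (equivalently, note that $xa$ is an involution).
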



Let $\Delta_\alpha:=\Inn(G)\Aut(G)_\alpha$. Then $\Delta_\alpha\leq\Aut(G)$.
Besides, $\Delta_\alpha$ and $\ell_\alpha$ are isomorphic and $\sigma_g\gamma\in \Delta_\alpha$, where $\gamma\in\Aut(G)_\alpha$.
Note that if $a=\alpha(a^{-1})$ for some $a\in G$, then $xa=a^{-1}x$. Two corollaries are introduced.


\begin{corollary}
Let $X_i=\GC(G,S_i,\alpha)$, where $S_i=\{a_i,\alpha(a_i^{-1})\},\,(i=1,2)$.
If $\alpha=\sigma_x$ for some $x\in G$ such that $xa_1$ and $xa_2$ are conjugate in $\Delta_\alpha$, then $X_1\cong X_2$.
\end{corollary}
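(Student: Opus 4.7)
The plan is to translate the $\Delta_\alpha$-conjugacy of $xa_1$ and $xa_2$ into the $\ell_\alpha$-equivalence $S_2=\alpha(g)S_1^\gamma g^{-1}$ via Theorem~\ref{transform}, and then exhibit an explicit vertex bijection realizing the graph isomorphism.

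First I would unpack the conjugacy hypothesis. Since $xa_1$ and $xa_2$ are conjugate in $\Delta_\alpha=\Inn(G)\Aut(G)_\alpha$, write the conjugating element as $\sigma_g\gamma$ with $g\in G$ and $\gamma\in\Aut(G)_\alpha$. Because $\alpha=\sigma_x$ is an involution, $x^2$ is central (so $x^{-1}=x$ in the simple/centerless setting of the paper), giving $(xa_i)^{-1}=a_i^{-1}x$. As $\sigma_g\gamma$ is an automorphism it also sends $a_1^{-1}x$ to $a_2^{-1}x$, so it transforms $\{xa_1,a_1^{-1}x\}$ to $\{xa_2,a_2^{-1}x\}$. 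The backward direction of Theorem~\ref{transform} then yields
\[
S_2 \;=\; \{a_2,\alpha(a_2^{-1})\} \;=\; \{a_1,\alpha(a_1^{-1})\}^{(g,\gamma)} \;=\; \alpha(g)S_1^\gamma g^{-1}.
\]

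Next I would write down a bijection and verify it is a graph isomorphism. The map to try is $\Phi:G\to G$, $\Phi(y)=y^\gamma g^{-1}$. Using $\gamma\in\Aut(G)_\alpha$, hence $\gamma\alpha=\alpha\gamma$, the direct calculation
\[
\alpha(\Phi(y_1)^{-1})\Phi(y_2) \;=\; \alpha(gy_1^{-\gamma})\,y_2^\gamma g^{-1} \;=\; \alpha(g)\bigl(\alpha(y_1^{-1})y_2\bigr)^\gamma g^{-1}
\]
shows this element lies in $S_2=\alpha(g)S_1^\gamma g^{-1}$ if and only if $\alpha(y_1^{-1})y_2\in S_1$, which is precisely the edge condition in $X_1$. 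Hence $\Phi$ preserves adjacency and $X_1\cong X_2$.

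The main obstacle is picking the correct vertex map: the naive guess $y\mapsto gy^\gamma$ fails, because the factor $\alpha(g^{-1})g$ gets stranded in the middle of the edge expression and does not rearrange into the shape $\alpha(g)\cdots g^{-1}$ demanded by the $\ell_\alpha$-action. Placing $g^{-1}$ on the right instead makes $\alpha$ deposit $\alpha(g)$ on the left, matching the action exactly. A minor bookkeeping point is that $\gamma\in\Aut(G)_\alpha$ forces $\sigma_{\gamma(x)}=\sigma_x$, hence $\gamma(x)=x$ when $Z(G)=1$, which is what legitimises passing $\gamma$ through $x$ in the intermediate computations.
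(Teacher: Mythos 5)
Your proof is correct and follows essentially the same route as the paper: translate the $\Delta_\alpha$-conjugacy of $xa_1$ and $xa_2$ into $\{xa_1,a_1^{-1}x^{-1}\}^{\sigma_g\gamma}=\{xa_2,a_2^{-1}x^{-1}\}$ and invoke the reverse direction of Theorem~\ref{transform} to get $S_2=\alpha(g)S_1^\gamma g^{-1}$. The paper stops there and treats the resulting graph isomorphism as immediate, whereas you additionally verify it with the explicit vertex map $y\mapsto y^\gamma g^{-1}$ --- a worthwhile check, and your computation of it is correct.
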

\begin{proof}
Since $xa_1$ and $xa_2$ are conjugate in $\Delta_\alpha$, there is a $\sigma_g\gamma\in\Delta_\alpha$ such that $xa_2=(xa_1)^{\sigma_g\gamma}$.
It implies that $\{xa_1,a_1^{-1}x^{-1}\}^{\sigma_g\gamma}=\{xa_2,a_2^{-1}x^{-1}\}$. Thus $X_1\cong X_2$ by Theorem \ref{transform}.
\end{proof}

\begin{corollary}\label{}
$\ell_\alpha$ and $\Delta_\alpha$ are permutation isomorphic on $\bigcup_{i\in \pi_p}\Omega_x^*(G,i)$ and $\Pi_\alpha(G)$ respectively.
\end{corollary}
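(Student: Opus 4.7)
The plan is to exhibit an explicit bijection $\phi: \Pi_\alpha(G) \to \bigcup_{i} \Omega_x^*(G,i)$ and check, using Theorem \ref{transform}, that it intertwines the $\ell_\alpha$-action on $\Pi_\alpha(G)$ given by Corollary \ref{abab} with the natural action of $\Delta_\alpha$ on $G$-subsets, under the isomorphism $(g,\gamma) \mapsto \sigma_g\gamma$ recalled just before the statement.

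Since $\alpha = \sigma_x$ is an involutory inner automorphism and $G$ is nonabelian simple (so $Z(G) = 1$), the element $x$ must itself be an involution; in particular $x^{-1} = x$. I would define
$$\phi\bigl(\{a,\alpha(a^{-1})\}\bigr) := \{xa,(xa)^{-1}\}.$$
A direct computation gives $x\cdot\alpha(a^{-1}) = x\cdot xa^{-1}x = a^{-1}x = (xa)^{-1}$, so left multiplication by $x$ sends the unordered pair $\{a,\alpha(a^{-1})\}$ to $\{xa,(xa)^{-1}\}$. Because $x^2=1$, the same operation is its own inverse, so $\phi$ is a bijection on 2-subsets (the degenerate cases $a = \alpha(a^{-1})$ and $y = y^{-1}$ correspond under this identification).

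Next I would verify $\phi$ carries $\Pi_\alpha(G)$ onto $\bigcup_i \Omega_x^*(G,i)$, i.e.\ that the exclusion of $\omega_\alpha(G)$ on the domain matches the exclusion of $\omega_x^*(G)$ from $\Omega_x(G,2)$ on the codomain. Writing $a = \alpha(h)h^{-1} = xhxh^{-1}$ and using $x^2=1$ gives $xa = hxh^{-1}$, so $a\in\omega_\alpha(G)$ if and only if $xa$ lies in the conjugacy class of $x$, if and only if $\{xa,(xa)^{-1}\}\in\omega_x^*(G)$. For orders $i\neq 2$ the element $xa$ cannot be conjugate to the involution $x$ by order considerations, so $\Omega_x^*(G,i)=\Omega_x(G,i)$ and no further adjustment is needed.

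The intertwining property is then an immediate restatement of Theorem \ref{transform}: for every $(g,\gamma)\in\ell_\alpha$ one has
$$\{a,\alpha(a^{-1})\}^{(g,\gamma)} = \{b,\alpha(b^{-1})\} \ \Longleftrightarrow\ \sigma_g\gamma\bigl(\{xa,a^{-1}x\}\bigr) = \{xb,b^{-1}x\},$$
which says exactly $\phi\bigl(p^{(g,\gamma)}\bigr) = (\sigma_g\gamma)\bigl(\phi(p)\bigr)$ for every $p\in\Pi_\alpha(G)$. Combined with the group isomorphism $\ell_\alpha\to\Delta_\alpha$ noted in the preliminaries, this is the definition of a permutation isomorphism. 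I expect the only genuine obstacle to be the careful treatment of the degenerate pairs and of the $n=2$ exclusion; once these bookkeeping checks are in place the result follows directly from Theorem \ref{transform}.
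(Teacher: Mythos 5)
Your proof is correct and follows essentially the same route as the paper: the paper's argument also defines the map $\lambda:\{a,\alpha(a^{-1})\}\mapsto\{xa,a^{-1}x\}$ (identical to your $\phi$, since $(xa)^{-1}=a^{-1}x$ for the involution $x$) together with $\psi:(g,\gamma)\mapsto\sigma_g\gamma$, and invokes Theorem \ref{transform} for the intertwining relation. Your additional bookkeeping --- checking that $\omega_\alpha(G)$ in the domain corresponds to $\omega_x^*(G)$ in the codomain --- is a welcome elaboration that the paper's one-line proof leaves implicit.
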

\begin{proof}
Let $\lambda:\kappa=\{a,\alpha(a^{-1})\}\mapsto\{xa,a^{-1}x\}$ and $\psi:\hbar=(g,\gamma)\mapsto \sigma_g\gamma$, then we will have
$\lambda(\kappa^\hbar)=\lambda(\kappa)^{\psi(\hbar)}$ by Theorem \ref{transform}.
\end{proof}

\begin{lemma}
Let $x\in G$ be an involution and $\alpha=\sigma_x$. Then
$\Aut(G)_{\omega_x^*(G)}=\Delta_\alpha$.
\end{lemma}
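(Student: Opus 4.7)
The plan is to leverage the observation already made in the preliminaries that $\omega_x^*(G)$ coincides (once one collapses the singleton pairs) with the conjugacy class of the involution $x$ in $G$. Indeed, for $g = \alpha(h)h^{-1} = xhxh^{-1}$ one has $xg = hxh^{-1}$, and since $hxh^{-1}$ is an involution this equals $g^{-1}x^{-1}$, so each pair in $\omega_x^*(G)$ is a singleton and the collection of these singletons is precisely the conjugacy class $x^G$. Hence the lemma reduces to showing that the setwise stabilizer of $x^G$ in $\Aut(G)$ equals $\Delta_\alpha = \Inn(G)\Aut(G)_\alpha$.

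For the inclusion $\Delta_\alpha \subseteq \Aut(G)_{\omega_x^*(G)}$, I would first note that $\Inn(G)$ stabilizes every conjugacy class of $G$ setwise, so it certainly stabilizes $x^G$. For $\gamma \in \Aut(G)_\alpha$, the key identity $\gamma \sigma_x \gamma^{-1} = \sigma_{\gamma(x)}$ together with $\gamma\alpha = \alpha\gamma$ yields $\sigma_{\gamma(x)} = \sigma_x$. Since $G$ is (nonabelian) simple, $Z(G) = 1$, so this forces $\gamma(x) = x$; in particular $\gamma$ preserves $x^G$ setwise. Combining these, $\Delta_\alpha$ sits inside the setwise stabilizer.

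For the reverse inclusion, I would take an arbitrary $\gamma \in \Aut(G)_{\omega_x^*(G)}$. By definition $\gamma(x) \in x^G$, so $\gamma(x) = hxh^{-1}$ for some $h \in G$. Then $\sigma_{h^{-1}}\gamma$ fixes $x$, and by the same centerless argument $\Aut(G)_x = \Aut(G)_\alpha$, whence $\sigma_{h^{-1}}\gamma \in \Aut(G)_\alpha$. Writing $\gamma = \sigma_h \cdot (\sigma_{h^{-1}}\gamma)$ exhibits $\gamma$ as an element of $\Inn(G)\Aut(G)_\alpha = \Delta_\alpha$.

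None of the steps is likely to present a real obstacle; the only genuinely substantive point is the identification $\Aut(G)_\alpha = \Aut(G)_x$, which rests on the standing assumption that $G$ is nonabelian simple (so $Z(G)=1$). If one ever wanted to drop simplicity, this equality would fail in general and the lemma would have to be corrected by a central element, but within the scope of this paper the argument is immediate.
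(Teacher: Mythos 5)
Your proof is correct and follows essentially the same route as the paper's: identify $\omega_x^*(G)$ with the conjugacy class $x^G$ and compute its setwise stabilizer as $\Inn(G)\Aut(G)_x=\Inn(G)\Aut(G)_\alpha$. You are merely more explicit than the paper, which leaves the inclusion $\Delta_\alpha\subseteq\Aut(G)_{\omega_x^*(G)}$ as ``obvious'' and does not spell out that the identification $\Aut(G)_x=\Aut(G)_\alpha$ rests on $Z(G)=1$.
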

\begin{proof}
Suppose $\Psi=\omega_x^*(G)$. Since
$\Aut(G)_{\{\Psi\}}=\{\epsilon\in\Aut(G)|\Psi^\epsilon=\Psi\}=\{\epsilon\in\Aut(G)|x^\epsilon=x^h\,{\rm for\, any}\, h\in G\}$,
it implies $x^{\sigma_h\epsilon^{-1}}=x$, then $\sigma_h\epsilon^{-1}\in\Aut(G)_x$, thus $\epsilon\in\Delta_\alpha$. The inverse is obvious.
\end{proof}

\begin{theorem}
The symbols are defined as above, then we have the following results:
\begin{enumerate}[(1).]
  \item Let $\wi{F}=\{xa,a^{-1}x|a\in \wi{F}(1)\}$. Then $\wi{F}=\omega_x^*(G)$.
  \item $\Omega_x(G,i)$ and $\Omega_y(G,i)$ are conjugacy subsets if and
only if $x,y$ are conjugacy involutions.
  \item  $\Delta_\alpha$ has an action on $\Omega_x(G,i)$ for $i\big{|}|G|$. Especially, if $\Delta_\alpha$ is not transitive
  on $\Omega_x(G,2)$, then $\wi{F}$ is one of its orbits.
\end{enumerate}
\end{theorem}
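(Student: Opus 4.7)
All three statements should follow by combining the definitions of Section \ref{pre} with the preceding stabilizer lemma $\Aut(G)_{\omega^*_x(G)} = \Delta_\alpha$ and the fact that $\alpha = \sigma_x$ is induced by an involution $x$. For (1), the plan is to first unpack $\wi F(1)$: the defining action $a \mapsto \alpha(g) a^\gamma g^{-1}$ applied to $a = 1$ reduces to $\alpha(g) g^{-1}$, which is independent of $\gamma$, so $\wi F(1) = \omega_\alpha(G)$. Since $x^2 = 1$ gives $x^{-1} = x$, the pair $\{xa, a^{-1}x\}$ coincides with $\{xa, a^{-1}x^{-1}\}$; comparing with the defining expression of $\omega^*_x(G)$ yields $\wi F = \omega^*_x(G)$ at once.

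For (2), the forward direction is a routine calculation: if $x^\delta = y$ for some $\delta \in \Aut(G)$, then each pair $\{xg, g^{-1}x^{-1}\} \in \Omega_x(G, i)$ is carried by $\delta$ to $\{y g^\delta, (g^\delta)^{-1} y^{-1}\} \in \Omega_y(G, i)$ with the order preserved. For the converse, one uses that sitting inside $\Omega_x(G, 2)$ is the distinguished subset $\omega^*_x(G)$ (identified in part (1) with the conjugacy class of $x$) which is $\Delta_\alpha$-invariant by the stabilizer lemma; an automorphism identifying the $\Omega_x$- and $\Omega_y$-structures must match these distinguished pieces, and this forces $x$ and $y$ to be conjugate involutions. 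For (3), the action of $\Delta_\alpha$ on $\Omega_x(G, i)$ is immediate once one notes that every $\epsilon \in \Delta_\alpha$ preserves the conjugacy class of $x$; consequently $(xg)^\epsilon = x^\epsilon g^\epsilon$ retains order $i$ and the image pair remains in $\Omega_x(G, i)$. For the ``Especially'' statement, part (1) gives $\wi F = \omega^*_x(G)$, which is a single $\Delta_\alpha$-orbit because $\Inn(G) \subseteq \Delta_\alpha$ is already transitive on the conjugacy class of $x$; if $\Delta_\alpha$ fails to be transitive on all of $\Omega_x(G, 2)$, then $\wi F$ appears as one of the (proper) orbits.

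The main technical hurdle I anticipate is the reverse direction of (2): one must make precise what ``conjugacy subsets'' is meant to record (since the underlying set $\Omega_x(G, i)$ as a family of inverse-pairs does not literally depend on the choice of base point), and then exploit the distinguished $\omega^*_x(G)$-piece together with the stabilizer lemma to extract that $x$ and $y$ lie in a common $\Aut(G)$-orbit of involutions, rather than obtaining this by a direct transport argument on the pairs.
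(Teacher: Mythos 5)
Your proposal is correct and follows essentially the same route as the paper's (very terse) proof: part (1) by identifying $\wi{F}(1)$ with $\omega_\alpha(G)$ and noting $xa=gxg^{-1}$ is a conjugate of $x$, part (2) by direct transport from the construction of $\Omega_x(G,i)$, and part (3) by combining the transfer result (Theorem \ref{transform}) with part (1) and the transitivity of $\Inn(G)$ on the conjugacy class of $x$. You supply noticeably more detail than the paper does, and your flag about the ambiguity of ``conjugacy subsets'' in the converse of (2) is a fair point that the paper's one-line justification does not address.
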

\begin{proof}
(1). It can be checked that $\wi{F}=\omega_x^*(G)$ as for any $a\in \wi{F}(1)$, there is
$a=\alpha(g)g^{-1}=xgxg^{-1}$ for some $g\in G$.

(2). It can be obtained by the construction of $\Omega_x(G,i)$.

(3). It can be proved by Theorem \ref{transform} and (1).
\end{proof}







\section{The proof of Question \ref{ques2}}\label{answer1}
\begin{theorem}\label{fusiontwo}
Let $G$ be simple and finite. For any involutory automorphism $\alpha$ and $n\big{|}|G|$, $G$ has $P_e(n,2)$ in $\Aut(G)$ if $\Delta_\alpha$ acts transitive on $\Omega_\alpha^*(G,n)$.
\end{theorem}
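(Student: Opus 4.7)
The plan is to translate the transitivity hypothesis on $\Omega_\alpha^*(G,n)$ into a statement about $\Aut(G)$-orbits on elements of order $n$, and then read off the bound on fusion classes. Writing $\alpha=\sigma_x$ for an involution $x\in G$, the identifications $\lambda:\{a,\alpha(a^{-1})\}\mapsto\{xa,a^{-1}x\}$ and $\psi:(g,\gamma)\mapsto\sigma_g\gamma$ already established in Section \ref{pre} show that the action of $\Delta_\alpha$ on $\Omega_x^*(G,n)$ is the natural one obtained by viewing $\Delta_\alpha\leq\Aut(G)$ acting on unordered pairs: since each element of $\Omega_x(G,n)$ has the form $\{xg,g^{-1}x^{-1}\}=\{h,h^{-1}\}$ with $h=xg$ of order $n$, an element $\delta\in\Delta_\alpha$ sends $\{h,h^{-1}\}$ to $\{h^\delta,(h^{-1})^\delta\}$. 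Consequently, transitivity of $\Delta_\alpha$ on $\Omega_\alpha^*(G,n)$ means: for any two elements $h_1,h_2\in G$ of order $n$ (with $h_i\notin x^G$ when $n=2$), there exists $\delta\in\Delta_\alpha$ with $h_1^\delta\in\{h_2,h_2^{-1}\}$.

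I would then split into two cases. When $n\neq 2$, fix any $h_0\in G$ of order $n$; by the preceding reformulation every element $h$ of order $n$ is mapped to $h_0$ or to $h_0^{-1}$ by some automorphism, so the elements of order $n$ are contained in the union of two fusion classes $\mathcal{F}_{h_0}\cup\mathcal{F}_{h_0^{-1}}$. When $n=2$, involutions satisfy $h^{-1}=h$, and by part (1) of the preceding theorem the set $\omega_x^*(G)$ is precisely the conjugacy class $x^G$, so that $\Omega_x^*(G,2)$ enumerates exactly the involutions of $G$ that are not $G$-conjugate to $x$. Transitivity of $\Delta_\alpha\leq\Aut(G)$ on this set fuses all such involutions into a single fusion class, while the involutions in $x^G$ lie in the single additional fusion class $\mathcal{F}_x$. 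In both cases we obtain at most two fusion classes of elements of order $n$ in $\Aut(G)$, which is exactly $P_e(n,2)$.

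The main obstacle is the first step, namely verifying that the abstract action of $\Delta_\alpha\cong\ell_\alpha$ on $\Omega_x^*(G,n)$ is genuinely realised by automorphisms of $G$ applied to pairs $\{h,h^{-1}\}$. This is delivered by a short chain: Theorem \ref{transform} rewrites $\{a,\alpha(a^{-1})\}^{(g,\gamma)}=\{b,\alpha(b^{-1})\}$ as $\sigma_g\gamma$ sending $\{xa,a^{-1}x\}$ to $\{xb,b^{-1}x\}$, and the corollary immediately after packages this as a permutation isomorphism between the two actions. Once the translation is accepted, the counting step is essentially free and requires no further structural information about $G$ beyond the fact that $\sigma_x$ is involutory (which, since $Z(G)=1$ for the simple nonabelian $G$, is equivalent to $x$ itself being an involution).
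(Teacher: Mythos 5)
Your proof is correct. For $n\neq 2$ it coincides with the paper's argument: transitivity of $\Delta_\alpha\leq\Aut(G)$ on the pairs $\{h,h^{-1}\}$ with $o(h)=n$ immediately places all elements of order $n$ in $\mathcal{F}_{h_0}\cup\mathcal{F}_{h_0^{-1}}$, and both you and the paper rely on the same translation (Theorem \ref{transform} together with the permutation-isomorphism corollary) to identify the abstract $\ell_\alpha$-action on $\Pi_\alpha(G)$ with the action of $\Delta_\alpha\leq\Aut(G)$ on $\Omega_x^*(G,n)$. For $n=2$, however, you take a genuinely different and more economical route. The paper splits on $C_G(2)$: when $C_G(2)\leq 2$ it just uses $F_G(2)\leq C_G(2)$, and when $C_G(2)\geq 3$ it invokes the transitivity hypothesis for three distinct involutory automorphisms $\sigma_x,\sigma_y,\sigma_z$ simultaneously, argues that $\langle\Delta_{\sigma_x},\Delta_{\sigma_y},\Delta_{\sigma_z}\rangle$ is transitive on $\Omega(G,2)$, and then cites \cite[Theorem 1.2]{Lich-CP-1} to get $F_G(2)=1$. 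You instead observe that for a single fixed $\alpha=\sigma_x$, transitivity of $\Delta_\alpha$ on $\Omega_x^*(G,2)$ (whose members are the singletons $\{h\}$ for involutions $h\notin x^G$, since $\omega_x^*(G)$ is the class of $x$) fuses all involutions outside $x^G$ into one fusion class, while those in $x^G$ lie in $\mathcal{F}_x$, so $F_G(2)\leq 2$ with no case distinction and no external citation. This also buys a slightly stronger statement: your argument derives $F_G(n)\leq 2$ from the hypothesis for one involutory automorphism $\alpha$ and one value of $n$, whereas the paper's Case 2 needs the hypothesis to hold for all involutory automorphisms at once.
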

\begin{proof}
Let $\alpha$ be an involutory automorphism, that is $\alpha=\sigma_x$ for some $x\in G$. Note that $\Omega_x^*(G,n)=\Omega_x(G,n)$ for $n\neq 2$ by (1) and $\Delta_\alpha$ is actually a setwise stabilizer of $\Aut(G)$. Therefore, if $\Delta_\alpha$ acts transitive on $\Omega_\alpha^*(G,n)$, then $\Aut(G)$ is the same as well for $n\neq 2$. If $n=2$, then $\Omega_x^*(G,2)=\Omega_x(G,2)-\omega_x^*(G)$ by (2). We need consider two cases.

Case 1: $C_G(2)\leq2$.

Note that $F_G(n)\leq C_G(n)$ for any $n\big{|}|G|$, then Case 1 is obvious.

Case 2: $C_G(2)\geq 3$.

We assume that $C_G(2)=3$ without loss of generality. Let $\{x,y,z\}$ be the elements of the involutory conjugacy classes, respectively. Thus $\Delta_\alpha$ (rep. $\Delta_\beta$, $\Delta_\gamma$ )is transitive on $\Omega_\alpha^*(G,2)$ (rep. $\Omega_\beta^*(G,2)$, $\Omega_\gamma^*(G,2)$), where $\alpha=\sigma_x,\,\beta=\sigma_y$ and $\gamma=\sigma_z$.
Therefore, there exists $a\in\Delta_\alpha$ (rep. $b\in\Delta_\beta$, $c\in\Delta_\gamma$) such that $y^a=z$ (rep. $x^b=z$, $x^c=y$).
It implies that
$\lg\Delta_\alpha,\Delta_\beta,\Delta_\gamma\rg$ acts transitive on $\Omega(G,2)$. Since $\lg\Delta_\alpha,\Delta_\beta,\Delta_\gamma\rg\leq\Aut(G)$, $\Aut(G)$ acts transitive on $\Omega(G,2)$ in this case. Therefore, $F_G(2)=1$ by \cite[Theorem 1.2]{Lich-CP-1}.
As a conclusion, $F_G(n)\leq 2$ for any $n\big{|}|G|$ in any case, that is $G$ has $P_e(n,2)$.
\end{proof}

\begin{theorem}\label{main1}
Let $G$ be simple and finite. For any involutory automorphism $\alpha$ and $n\big{|}|G|$, $\Delta_\alpha$ is transitive on $\Omega_\alpha^*(G,n)$  if and only if $G$ is $A_5$, $L_2(7)$, $A_6$, $L_2(8)$, $M_{11}$, $L_3(4)$, $Sz(8)$ or $M_{23}$.
\end{theorem}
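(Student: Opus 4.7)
The plan is to prove the two implications separately.

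For necessity, I would invoke Theorem \ref{fusiontwo} directly: if $\Delta_\alpha$ acts transitively on $\Omega_\alpha^*(G,n)$ for every involutory $\alpha \in \Aut(G)$ and every $n \mid |G|$, then $G$ satisfies $P_e(n,2)$ in $\Aut(G)$ for all such $n$. The classification of finite nonabelian simple groups enjoying this global fusion-class-two property is the content of \cite[Theorem 1.2]{Lich-CP-1}, and the list produced there consists exactly of $A_5$, $L_2(7)$, $A_6$, $L_2(8)$, $M_{11}$, $L_3(4)$, $Sz(8)$ and $M_{23}$. Thus necessity reduces to quoting that classification and matching the two lists.

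For sufficiency, fix one of the eight groups $G$ together with an arbitrary involutory $\alpha \in \Aut(G)$. Proposition \ref{con} gives $\wi{F}_\alpha(a)^\delta = \wi{F}_{\alpha^\delta}(a^\delta)$, so conjugating by a suitable $\delta \in \Aut(G)$ preserves the orbit structure on the relevant pair sets; in each of the listed groups this allows me to reduce to $\alpha = \sigma_x$ for some involution $x \in G$ lying in an $\Aut(G)$-invariant class. Theorem \ref{transform}, together with the corollary identifying $\ell_\alpha$ acting on $\Omega_x^*(G,n)$ with $\Delta_\alpha$ acting on $\Pi_\alpha(G)$ permutation-isomorphically, then recasts the transitivity question: I must show that the unordered pairs $\{xa, a^{-1}x\}$ with $o(xa) = n$ fuse into a single orbit under elements $\sigma_g \gamma \in \Delta_\alpha$. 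For $n \neq 2$ this follows from $P_e(n,2)$: the two putative fusion classes of elements of order $n$ are interchanged by $y \mapsto y^{-1}$, which is already absorbed in the passage to unordered pairs, collapsing to one orbit.

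The main obstacle is the case $n = 2$, where \eqref{equation-doubi} gives $\Omega_x^*(G,2) = \Omega_x(G,2) - \omega_x^*(G)$, removing only the conjugacy class of $x$ itself. When $C_G(2) = 1$ the residual set is empty and transitivity is automatic; this disposes of several of the eight groups immediately by Lemma \ref{+1}. The remaining subcases, in which $G$ carries at least two classes of involutions (notably certain choices among $L_2(7)$, $A_6$, $L_3(4)$), I would handle by direct Atlas inspection: pick a representative $x$ in an $\Aut(G)$-fixed involution class, enumerate the involution classes that arise as products $xa$ with $xa \notin \omega_x^*(G)$, and exhibit an explicit element of $\Delta_\alpha = \Inn(G)\Aut(G)_\alpha$ fusing any two resulting pairs (the fusing element is furnished by the outer automorphism group in combination with an inner conjugation stabilising $x$). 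I would close the argument with a compact case-by-case table covering each of the eight groups and each divisor $n$ of $|G|$, listing the $\Delta_\alpha$-orbits on $\Omega_\alpha^*(G,n)$ and the fusing element when the orbit is nontrivial.
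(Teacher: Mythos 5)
Your necessity argument has a genuine gap: you identify the classification of finite simple groups with $P_e(n,2)$ in $\Aut(G)$ for all $n\mid|G|$ with the eight-group list, but these are two different results in \cite{Lich-CP-1}. Theorem \ref{fusiontwo} only delivers $F_G(n)\le 2$ for every $n$, and the finite simple groups with at most two fusion classes of every order form a strictly larger list (\cite[Theorem 1.1]{Lich-CP-1}), containing for instance $A_7$ and $A_8$. The eight groups of the present statement are those of \cite[Theorem 1.2]{Lich-CP-1}, which is the stronger property that any two elements of the same order are fused or inverse-fused, i.e.\ that $\Aut(G)$ is transitive on $\Omega(G,n)$. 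Having at most two fusion classes of order $n$ does not force those two classes to be interchanged by inversion: in $A_7$ the $3$-cycles and the products of two disjoint $3$-cycles are two fusion classes of order $3$, each closed under inversion, so ${\rm S}_7=\Aut(A_7)$ is not transitive on $\Omega(G,3)$ and a fortiori $\Delta_\alpha$ is not transitive on $\Omega_\alpha^*(G,3)$; yet $A_7$ passes the test you propose. The paper closes exactly this gap: it explicitly exhibits non-fused pairs of equal order for $A_7$ and $A_8$ (a $3$-cycle versus a double $3$-cycle arising as $xg$ and $xh$), and for the remaining groups of the Theorem 1.1 list it extracts from the proof of \cite[Theorem 1.2]{Lich-CP-1} two couples $\{a,a^{-1}\}$, $\{b,b^{-1}\}$ of the same order that no automorphism interchanges. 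Your proposal omits this elimination step entirely, so as written it would certify too many groups.

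Two smaller points. First, in the sufficiency direction your $n\neq 2$ argument only yields transitivity of $\Aut(G)$ on $\Omega(G,n)$ (and even that should be quoted from \cite[Theorem 1.2]{Lich-CP-1} rather than deduced from $P_e(n,2)$, for the reason above), whereas Theorem \ref{main1} asks for transitivity of the a priori proper subgroup $\Delta_\alpha=\Inn(G)\Aut(G)_\alpha$. The missing observation, which the paper makes, is that each of the eight groups has $C_G(2)=1$, so that $\Delta_\alpha=\Aut(G)$ and simultaneously $\Omega_\alpha^*(G,2)=\emptyset$. Second, and relatedly, your planned Atlas analysis of groups among $L_2(7)$, $A_6$, $L_3(4)$ carrying at least two classes of involutions is vacuous: all eight groups have a single conjugacy class of involutions, so those subcases never occur.
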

\begin{proof}
If $\Delta_\alpha$ is transitive on any $\Omega_\alpha^*(G,n)$, then $F_G(n)\leq2$ for any $n\big{|}|G|$ by Theorem \ref{fusiontwo}. Hence
$G$ can only be the groups in \cite[Theorem 1.1]{Lich-CP-1}.

Let $G=A_n,\,(5\leq n\leq8)$. When $n=7,8$, we suppose $\alpha=\sigma_x$, where $x=(1\,2)(3\,4)$. Let $g=(1\,2\,3)$, $h=(1\,4)(2\,5\,3\,6)$. Hence, $g,f\notin \wi{F}(1)$. Further,
$xg=(1\,3\,4)$ and $xh=(1\,5\,3)(2\,4\,6)$, thus $xg$ and $xh$ (rep. $(xh)^{-1}$) are not fused as $\Aut(G)={\rm S}_n$.

Let $G$ be the group in \cite[Theorem 1.1]{Lich-CP-1}. Moreover, assume $G$ is not in \cite[Theorem 1.2]{Lich-CP-1} and not $A_n$ as above. Then we can always find two couples $\{a,a^{-1}\}$ and $\{b,b^{-1}\}$ with the same order such that $\{a,a^{-1}\}^\gamma\neq\{b,b^{-1}\}$ for any $\gamma\in\Aut(G)$ from the proof of \cite[Theorem 1.2]{Lich-CP-1}. Hence, $\Delta_\alpha$ is not transitive on the corresponding $\Omega_\alpha^*(G,n)$. Therefore, $G$ must be one of the eight groups above.

Conversely, if $G$ is any one of those eight groups, then $C_G(2)=1$ by \cite{CCNPW}, which implies that
$\Delta_\alpha=\Aut(G)$ and $\Omega_\alpha^*(G,2)=\emptyset$. Therefore, the sufficiency is obvious by \cite[Theorem 1.2]{Lich-CP-1}.
\end{proof}

\section{Several groups which are not restricted $m$-GCI-groups}\label{example}

\begin{lemma}\label{A6}
$A_6$ is not local $1$-GCI-group.
\end{lemma}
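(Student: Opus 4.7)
The plan is to exhibit two singleton generalized Cayley graphs on $A_6$ that are graph-isomorphic but cannot be matched up under the local $1$-GCI condition, by choosing their defining involutory automorphisms from distinct conjugacy classes in $\Aut(A_6)$. The core observation is that whenever $|S| = 1$, say $S = \{a\}$ with $a \in G_{-\alpha} \setminus \omega_\alpha(G)$, the graph $\GC(G, \{a\}, \alpha)$ is a perfect matching: every vertex $g$ has the unique neighbour $\alpha(g)a$, the condition $a \notin \omega_\alpha(G)$ kills loops, and $a \in G_{-\alpha}$ guarantees the return edge. Consequently any two such graphs on $A_6$ are automatically graph-isomorphic.

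For the first pair, I take $\alpha_1 = \sigma_x$ with the inner involution $x = (12)(34) \in A_6$, and set $S_1 = \{x\}$: one has $\alpha_1(x) = x = x^{-1}$, while $xgxg^{-1} = x$ forces $x = 1$, so $x \in G_{-\alpha_1} \setminus \omega_{\alpha_1}(A_6)$. For the second pair, let $\alpha_2$ be the automorphism of $A_6$ induced by conjugation by the transposition $(56) \in S_6$; this is an outer involution of $A_6$ because $C_{S_6}(A_6) = 1$ and $(56) \notin A_6 \cong \Inn(A_6)$. Take $S_2 = \{(12)(34)\}$. Since $(56)$ and $(12)(34)$ have disjoint supports, $\alpha_2((12)(34)) = (12)(34)$, so $(12)(34) \in G_{-\alpha_2}$. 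A direct computation gives $\omega_{\alpha_2}(A_6) = \{(56)\cdot(g(5),g(6)) : g \in A_6\}$; as $A_6$ is transitive on unordered pairs from $\{1,\dots,6\}$, the inner transposition factor ranges over all $15$ transpositions, and so every nonidentity element of $\omega_{\alpha_2}(A_6)$ contains $5$ or $6$ in its support. The disjoint double transposition $(12)(34)$ is therefore excluded, confirming $S_2$ is a valid generalized Cayley subset.

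Both graphs are now perfect matchings on $360$ vertices, hence graph-isomorphic. The local $1$-GCI condition, however, requires some $\gamma \in \Aut(A_6)$ with $\alpha_2 = \alpha_1^\gamma$. Since $\Inn(A_6)$ is normal in $\Aut(A_6)$, every $\Aut(A_6)$-conjugate of the inner $\alpha_1$ is again inner; yet $\alpha_2$ is outer. This obstructs the very first half of the local $1$-GCI condition, so no pair $(g,\gamma)$ can possibly exist. Therefore $A_6$ fails the local $1$-GCI property.

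The step most likely to require care is the verification that $(12)(34) \notin \omega_{\alpha_2}(A_6)$: once one identifies the image set explicitly as products $(56)\cdot(ij)$, the exclusion is immediate, but this ad hoc computation with the outer involution is the only nontrivial piece. Everything else reduces to the general fact that singleton GC graphs are perfect matchings combined with the inner-versus-outer obstruction to $\Aut$-conjugacy.
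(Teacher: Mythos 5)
Your proof is correct and follows essentially the same strategy as the paper's: both exhibit a valency-one generalized Cayley graph for an inner involutory automorphism and one for an outer involutory automorphism of $A_6$, observe that any such singleton graph is a perfect matching (so the two are isomorphic), and conclude that the local $1$-GCI condition fails because an inner and an outer automorphism cannot be conjugate in $\Aut(A_6)$. The only difference is cosmetic: the paper uses the outer involution induced by $(1\,2)$ and a counting argument $|G_{-\beta}|>|\omega_\beta(G)|$ to show a valid singleton subset exists, whereas you use $(5\,6)$ and determine $\omega_{\alpha_2}(A_6)$ explicitly.
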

\begin{proof}
Let $G=A_6$ and $a=(1\,2)(3\,4)$, $b=(1\,2)$. Then we see that $a$ and $b$ induce a involutory inner automorphism and outer automorphism on $G$ by conjugation, respectively. Let $\alpha,\beta\in\Aut(G)$. And they are induced by $a$ and $b$. Note that $a\in G_{-\alpha}-\omega_\alpha(G)$ by Lemma \ref{+1}, so there exists $\GC(G,S,\alpha)$ where $S=\{a\}$. By computation, we obtain $|G_\beta|=24$ as the elements in $G_\beta$ are even permutations in $S_\Omega$ where $\Omega=\{3,4,5,6\}$ or like `$(12)\cdot k$' where $k$ is an odd permutations in $S_\Omega$, thus $|\omega_\beta(G)|=\frac{|G|}{|G_\beta|}=15$.
For $x\in G_{-\beta}$, it should be the element satisfying $(12)x(12)x=1$. Hence, if the element is like $(x_1\,x_2)(x_3\,x_4)$, $(1\,2)(x_1\,x_2)$ or $(1\,2)(x_1\,x_2\,x_3\,x_4)$, where $x_i\in\{3,4,5,6\}$, then it is contained in $G_{-\beta}$. It means $|G_{-\beta}|\geq 15$. Note that $(1\,2)(1\,2\,x_5)(1\,2)(1\,2\,x_5)=1$ where $x_5\in \{3,4,5,6\}$, it implies that $|G_{-\beta}|>15$ and $|G_{-\beta}|>|\omega_\beta(G)|$. Thus, there exists one generalized Cayley graph like $\GC(G,\{k\},\beta)$, where $k\in G_{-\beta}-\omega_\beta(G)$. Hence, $G$ is not local 1-GCI-group.
\end{proof}

\begin{lemma}\label{L27}
$L_2(7)$ is not local 2-GCI-group.
\end{lemma}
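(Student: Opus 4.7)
The plan is to follow the template of Lemma \ref{A6}: exhibit two isomorphic generalized Cayley graphs of $G = L_2(7)$ whose involutory automorphisms lie in distinct $\Aut(G)$-conjugacy classes — specifically one inner and one outer — thereby blocking the condition $\alpha_2 = \alpha_1^\gamma$ demanded by Definition \ref{mgci}, since $\Inn(G) \trianglelefteq \Aut(G) = \mathrm{PGL}(2,7)$.

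First I would pick an involution $x \in G$ and set $\alpha_1 = \sigma_x \in \Inn(G)$. Because $C_G(2) = 1$, this choice is canonical up to conjugation in $G$, and by Lemma \ref{+1} we have $|G_{-\alpha_1}| = |\omega_{\alpha_1}(G)| + 1$, so $x$ itself is the unique element of $G_{-\alpha_1} \setminus \omega_{\alpha_1}(G)$, giving the generalized Cayley subset $S_1 = \{x\}$. The resulting graph $\GC(G, S_1, \alpha_1)$ is $1$-regular, i.e. a perfect matching on the $168$ vertices of $G$. Next I would fix an outer involution $\beta \in \Aut(G) \setminus \Inn(G)$, available because $|\Aut(G):\Inn(G)| = 2$ and $\mathrm{PGL}(2,7) \setminus \mathrm{PSL}(2,7)$ contains involutions. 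Realising $G$ as $\mathrm{GL}(3,2)$ with $\beta$ the transpose-inverse map (or as $\mathrm{PSL}(2,7)$ on the projective line with $\beta$ a suitable involution in $\mathrm{PGL}(2,7)$), I would compute $|G_\beta|$, $|\omega_\beta(G)| = |G|/|G_\beta|$ and $|G_{-\beta}|$ explicitly and locate some $k \in G_{-\beta} \setminus \omega_\beta(G)$, giving $S_2 = \{k\}$ so that $\GC(G, S_2, \beta)$ is also a perfect matching on $168$ vertices.

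Any two perfect matchings on the same vertex set are graph-isomorphic, so $\GC(G, S_1, \alpha_1) \cong \GC(G, S_2, \beta)$ with $|S_1| = |S_2| = 1 \leq 2$. If $G$ were a local $2$-GCI-group, Definition \ref{mgci} would require some $\gamma \in \Aut(G)$ with $\alpha_1^\gamma = \beta$; but $\alpha_1^\gamma \in \Inn(G)$ for every $\gamma$ by normality of $\Inn(G)$, contradicting $\beta \notin \Inn(G)$. Hence $G$ is not local $2$-GCI.

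The main obstacle is the explicit combinatorial verification that $G_{-\beta} \setminus \omega_\beta(G) \neq \emptyset$ for a concrete outer involution $\beta$, which is the analogue of the second-half count carried out for $A_6$ in Lemma \ref{A6} and which relies on a chosen matrix realisation of $\beta$. Should it happen that $|G_{-\beta}| = |\omega_\beta(G)|$ for every outer involution, the fallback would be to enlarge to $|S_i| = 2$ by taking single $2$-element orbits $\{a, \alpha(a^{-1})\}$ for an inner $\alpha$ and an outer $\beta$ respectively, and verifying isomorphism of the two $2$-regular generalized Cayley graphs by matching their cycle structures; the conclusion via $\alpha_1^\gamma \neq \beta$ would then be identical.
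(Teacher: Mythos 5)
Your primary route breaks down at the very step you flag as the ``main obstacle'': for $G=L_2(7)$ there is \emph{no} outer involutory automorphism $\beta$ with $G_{-\beta}\setminus\omega_\beta(G)\neq\emptyset$, so no valency-one generalized Cayley graph $\GC(G,\{k\},\beta)$ exists. Concretely, $\Aut(G)=\mathrm{PGL}(2,7)$ has a single class of outer involutions, of size $336/12=28$, and an outer involution $t$ has $|C_G(t)|=6$; hence $|\omega_\beta(G)|=|G|/|G_\beta|=168/6=28$, while $G_{-\beta}=\{g\in G\mid (tg)^2=1\}$ is in bijection with the involutions in the coset $tG=\mathrm{PGL}(2,7)\setminus\mathrm{PSL}(2,7)$, of which there are exactly $28$. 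Since $\omega_\beta(G)\subseteq G_{-\beta}$ and both sets have size $28$, they coincide. (This is exactly the situation the paper's Theorem on local $1$-GCI-groups anticipates with its condition $G_{-\alpha}=\omega_\alpha(G)$ for outer $\alpha$; the $A_6$ argument of Lemma \ref{A6} works only because there the inequality $|G_{-\beta}|>|\omega_\beta(G)|$ is strict.)

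Your fallback is the correct strategy and is what the paper actually does, but as written it is only a plan, not a proof: the whole content of the lemma is the existence of specific two-element subsets $S_1$ (for an inner involutory $\alpha$) and $S_2$ (for an outer involutory $\beta$) with $\GC(G,S_1,\alpha)\cong\GC(G,S_2,\beta)$. That isomorphism does not follow from ``matching cycle structures'' in the abstract --- one must actually produce a pair whose cycle types agree, and the paper does this by exhibiting $S_1$ consisting of two elements of order $7$ and $S_2$ consisting of two elements of order $4$ and checking the isomorphism by Magma. Until such a pair is exhibited and the isomorphism verified, the argument is incomplete; the non-conjugacy of inner and outer automorphisms, which you argue correctly via normality of $\Inn(G)$, is the easy half.
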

\begin{proof}
Let $G=L_2(7)$. Then $G\cong L_3(2)=\lg x_1,x_2|x_1^2=x_2^3=(x_1x_2)^7=[x_1,x_2]^4=1\rg$ by \cite{CCNPW}.
Let $x_1=(1\,4)(6\,7),x_2=(1\,3\,2)(4\,7\,5)$.
By \cite{ATLAS-Rep} and Magma \cite{BCP}, we can see that $\alpha:=\sigma_{x_1}$ and $\beta:\left\{\begin{array}{l}
                                                                x_1\mapsto x_1, \\
                                                                x_2\mapsto x_2^{-1},
                                                              \end{array}\right.
$ are inner and outer automorphism of $G$ respectively. Let $S_1=\{(1\,5\,2\,6\,3\,7\,4), (1\,6\,3\,7\,2\,5\,4)\}$ and
$S_2=\{(1\,4\,5\,3)(2\,7),(1\,3\,5\,4)(2\,7)\}$. By Magma \cite{BCP}, $\GC(G,S_1,\alpha)\cong\GC(G,S_2,\beta)$. However, these two involutory automorphisms are not conjugate, it follows that $L_2(7)$ is not local 2-GCI-group.
\end{proof}

\begin{lemma}\label{L34}
$L_3(4)$ is not local 2-GCI-group.
\end{lemma}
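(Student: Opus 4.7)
The plan is to mirror the strategy used for $L_2(7)$ in Lemma \ref{L27}. Recall that to show $G$ is not a local 2-GCI-group, it suffices to exhibit two involutory automorphisms $\alpha,\beta\in\Aut(G)$ which are \emph{not} conjugate in $\Aut(G)$, together with generalized Cayley subsets $S_1,S_2$ with $|S_i|\le 2$ such that $\GC(G,S_1,\alpha)\cong\GC(G,S_2,\beta)$. Indeed, by Definition \ref{mgci}(2) a witness of the local 2-GCI property would require $\alpha^\gamma=\beta$ for some $\gamma\in\Aut(G)$, which is prohibited once $\alpha$ and $\beta$ lie in different $\Aut(G)$-conjugacy classes of involutions.

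For $G=L_3(4)$, the first step is to select the two involutions in $\Aut(G)$. The outer automorphism group of $L_3(4)$ is $D_{12}$ and contains several classes of involutions (diagonal, graph, field, and their products). Concretely, I would fix a standard generating set for $L_3(4)$ as in \cite{ATLAS-Rep}, take $\alpha:=\sigma_{x}$ for a fixed involution $x\in L_3(4)$, and take $\beta$ to be the graph (transpose-inverse) automorphism, which is an involution lying outside $\Inn(G)$. Since $\Inn(G)$ is a characteristic (in fact normal) subgroup of $\Aut(G)$, $\alpha$ and $\beta$ cannot be conjugate in $\Aut(G)$.

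Next, one constructs the candidate edge-sets. Using Lemma \ref{+1} and the analogue computation of $G_{-\beta}$ versus $\omega_\beta(G)$, one checks that both $\alpha$ and $\beta$ admit generalized Cayley subsets of size $2$; that is, there exist $a_1\in G\setminus\omega_\alpha(G)$ and $a_2\in G\setminus\omega_\beta(G)$ with the symmetry condition of Definition \ref{D-G}(3) giving bona fide generalized Cayley subsets $S_1=\{a_1,\alpha(a_1^{-1})\}$ and $S_2=\{a_2,\beta(a_2^{-1})\}$. The isomorphism $\GC(G,S_1,\alpha)\cong\GC(G,S_2,\beta)$ will be produced and verified by a direct computation in Magma \cite{BCP}, exactly as was done for $L_2(7)$.

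The main obstacle is purely computational: among the several outer involutions of $L_3(4)$ and the many candidate pairs $(S_1,S_2)$, one must find a specific pair for which the two generalized Cayley graphs are actually isomorphic. This amounts to a finite search using the structure of $L_3(4)$ and its automorphism group, and no conceptual difficulty arises beyond ensuring that $\alpha$ and $\beta$ are chosen from distinct $\Aut(G)$-classes and that the resulting graphs have matching invariants (degree sequence, spectrum) before checking isomorphism on the nose with Magma.
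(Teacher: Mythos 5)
Your overall strategy is exactly the paper's: pick an inner involutory automorphism $\alpha=\sigma_{x}$ and an outer involutory automorphism $\beta$ of $L_3(4)$ (which, as you correctly observe, can never be conjugate in $\Aut(G)$ since $\Inn(G)\trianglelefteq\Aut(G)$), and then exhibit $2$-element generalized Cayley subsets $S_1,S_2$ with $\GC(G,S_1,\alpha)\cong\GC(G,S_2,\beta)$, so that the conjugacy requirement $\alpha^\gamma=\beta$ in Definition \ref{mgci}(2) cannot be met. This is the same template as Lemma \ref{L27}, and the paper's own proof of Lemma \ref{L34} follows it as well.

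However, there is a genuine gap: the entire content of the lemma is the \emph{existence} of such a pair $(S_1,S_2)$, and your proposal never produces one nor gives any structural reason why one must exist. It is not automatic that a $2$-valent generalized Cayley graph arising from an inner involution is isomorphic to one arising from an outer involution; a priori the two families could have disjoint sets of cycle types, in which case this line of attack would simply fail for $L_3(4)$. Saying that the witness ``will be produced and verified by a direct computation in Magma'' records the plan for the search, not its outcome. The paper closes exactly this gap by writing down explicit sets: $S_1$ a pair of order-$5$ elements with $o(x_1a_1)=2$ and $S_2$ a pair of involutions fixed by $b\mapsto\beta(b^{-1})$, and observing that both graphs are $\tfrac{|G|}{4}C_4$ (a disjoint union of $4$-cycles), whence the isomorphism is immediate; one could alternatively argue via Proposition \ref{odd} and Lemma \ref{cycle} that both automorphisms admit $2$-valent generalized Cayley graphs that are unions of cycles of a common length. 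Without either the explicit witness or such a counting/structural argument, the proof is incomplete.
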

\begin{proof}
Let $G=L_3(4)$. Then $G=\lg x_1,x_2|x_1^2=x_2^4=(x_1x_2)^7=(x_1x_2^2)^5=(x_1x_2x_1x_2^2)^7=(x_1x_2x_1x_2x_1x_2^2x_1x_2^{-1})^5=1\rg$ by \cite{CCNPW}.
Let $x_1=(1\,2)(4\,6)(5\,7)(8\,12)(9\,14)(10\,15)(11\,17)(13\,19)$ and
$x_2=(2\,3\,5\,4)(6\,8\,13\,9)(7\,10\,16\,11)(12\,18)(14\,20\,21\,15)(17\,19)$.
By \cite{ATLAS-Rep} and Magma \cite{BCP}, there is an inner involutory automorphisms $\alpha:=\sigma_{x_1}$ and an outer involutory automorphisms $$\beta:\left\{\begin{array}{l}
                      x_1\mapsto (1\,15)(2\,10)(4\,17)(5\,13)(6\,11)(7\,19)(8\,9)(12\,14), \\
                      x_2\mapsto (2\,13\,16\,21)(3\,9\,11\,15)(4\,8\,10\,20)(5\,6\,7\,14)(12\,17)(18\,19).
                    \end{array}\right.
$$
Set $$S_1:=\left\{\begin{array}{c}
          (1\, 7\, 17\, 14\, 3)(2\, 21\, 9\, 19\, 4)(5\, 10\, 16\, 8\, 11)(6\, 13\, 12\, 18\, 15), \\
          (1\, 6\, 13\, 14\, 21)(2\, 3\, 9\, 11\, 5)(4\, 10\, 18\, 8\, 19)(7\, 17\, 12\, 16\, 15)
        \end{array}\right\};$$
$$S_2:=\left\{\begin{array}{c}(1\, 19)(2\, 8)(3\, 16)(4\, 15)(7\, 14)(11\, 13)(12\, 17)(20\, 21),\\
(1\, 12)(2\, 11)(3\, 20)(4\, 14)(7\, 15)(8\, 13)(16\, 21)(17\, 19) \end{array}\right\}.$$
It follows that $\GC(G,S_1,\alpha)\cong\frac{|G|}{4}C_4\cong\GC(G,S_2,\beta)$. However, there exists no $g\in G$ and $\gamma\in\Aut(G)$ satisfying Definition \ref{D-G}. Thus $L_3(4)$ is not a local 2-GCI-group.
\end{proof}

\section{Local $m$-GCI-groups}\label{answer2}
Now we will classify all local $m$-GCI simple groups for $m=1$ or $2$. Let $X_i=\Cay(G,S_i)$, where $S_i=\{x_i,x_i^{-1}\},\,(i=1,2)$. It is well known that if $o(x_1)=o(x_2)$, then $X_1\cong X_2$, vice versa. However, the situation of generalized Cayley graphs is complicated, so we will
characterize 2-valent generalized Cayley graphs next.

\begin{proposition}\label{odd}
Let $G$ be simple and finite. Suppose $X=\GC(G,S,\alpha)$ where $\alpha=\sigma_x$ for some $x\in G$ and $S=\{a,\alpha(a^{-1})\}$.
Let $o(xa)=n$. Then $X\cong \frac{|G|}{2n}C_{2n}$ for $n$ is odd.
\end{proposition}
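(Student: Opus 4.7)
The plan is to identify a bijection $T\colon G\to G$ whose orbits are precisely the cycles of $X$. I would set $T(g):=\alpha(g)a = xgxa$. This is a bijection, since it is the composition of the automorphism $\alpha$ and right multiplication by $a$. The first step is to unpack the edge condition $\alpha(g^{-1})h\in S=\{a,\alpha(a^{-1})\}$. The branch $\alpha(g^{-1})h=a$ gives $h=\alpha(g)a=T(g)$, and the branch $\alpha(g^{-1})h=\alpha(a^{-1})$ gives $h=\alpha(ga^{-1})=xga^{-1}x$; a direct check shows this second element equals $T^{-1}(g)$ (since $\alpha(\alpha(ga^{-1}))a = ga^{-1}\cdot a = g$). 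Hence each vertex $g$ has exactly the two neighbours $T(g)$ and $T^{-1}(g)$, so $X$ is $2$-regular and is a disjoint union of cycles whose vertex-sets are the orbits of $\langle T\rangle$ on $G$.

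Next I would compute the iterates of $T$ by an easy induction using $\alpha^2=\mathrm{id}$ and $\alpha(a)\cdot a = xax\cdot a=(xa)^2$, obtaining
$$T^k(g)=\begin{cases} g(xa)^k, & k \text{ even},\\ xg(xa)^k, & k \text{ odd}.\end{cases}$$
With $n=o(xa)$ we get $T^{2n}(g)=g(xa)^{2n}=g$, so every $\langle T\rangle$-orbit has length dividing $2n$. The heart of the argument is to rule out shorter periods when $n$ is odd. If the period $k$ satisfies $0<k<2n$ and $k$ is even, then $g(xa)^k=g$ forces $n\mid k$, which is impossible for $n$ odd with $k<2n$. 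If instead $k$ is odd, then $xg(xa)^k=g$ yields $(xa)^k=g^{-1}xg$; the left side is an element of $\langle xa\rangle$ of odd order (dividing $n$), while the right side is a conjugate of the non-trivial involution $x$, a contradiction.

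It follows that every $\langle T\rangle$-orbit has size exactly $2n$, so $X$ decomposes into $|G|/(2n)$ cycles of length $2n$, giving $X\cong \frac{|G|}{2n}C_{2n}$. The main obstacle is the odd-$k$ case in the orbit-length calculation: everything else is a direct unwrapping of the edge definition, but ruling out short odd periods is exactly where the parity hypothesis on $n$ (and the fact that $x$ is a genuine involution, so that $\alpha$ has order $2$) is essential — the mismatch between the odd order of $(xa)^k$ and the order $2$ of a conjugate of $x$ is what forces the orbit length up from $n$ to $2n$. Simplicity of $G$ plays no role in the argument.
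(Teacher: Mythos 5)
Your proof is correct and follows essentially the same route as the paper's: the iterates $T^k(g)$ you compute are exactly the vertices $g,\ xg(xa),\ g(xa)^2,\dots$ along the path the paper traces, and both arguments rule out early closure by combining the oddness of $n$ with the fact that $g^{-1}xg$ is an involution. Your version is merely a cleaner packaging (explicit bijection $T$, explicit orbit-length analysis), and your closing observations --- that simplicity of $G$ is irrelevant while $x\neq 1$ is essential --- are accurate.
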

\begin{proof}
Assume $n$ is odd. For $g\in G$, the path on $g$ is like
$$\cdots-g-xg(xa)-g(xa)^2-xg(xa)^3-\cdots-g(xa)^{n-1}-\cdots,$$
then $xg(xa)^{2k-1}\neq g$ as $g^{-1}xg$ is an involution and $g(xa)^{2k}\neq g$ as $n\neq 2k$, so the path is not going to be closed before $g(xa)^{n-1}$, then we will have the left path is as follows:
$$\cdots-g(xa)^{n-1}-xg-gxa-xg(xa)^2-\cdots-xg(xa)^{n-1}-\cdots.$$
Note that $xg(xa)^{2k}\neq g$ as $n$ is odd for $2k<n$. Furthermore, when $n>2k-1$, $g(xa)^{2k-1}\neq g$. Thus $g(xa)^{2k-1}=g$ only if $n=2k-1$. Therefore, the length of the cycle is $2n$, which looks like as follows:
$$g-xg(xa)-g(xa)^2-\cdots-g(xa)^{n-1}-xg-gxa-xg(xa)^2-\cdots-xg(xa)^{n-1}-g.$$
\end{proof}

\begin{corollary}\label{aa5}
Let $G=A_5$ and $x$ be an involution of $G$. If $\alpha=\sigma_x$ for some $x\in G$, then $\GC(G,S,\alpha)$ is
vertex-transitive for $S=\{a,b\}$.
\end{corollary}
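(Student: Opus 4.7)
The plan is to split into two cases according to whether $a=b$, and in each case invoke a standard structure result. The crucial dichotomy is that $a=b$ if and only if $a=\alpha(a^{-1})=xa^{-1}x$, which rearranges to $(xa)^2=1$; hence $|S|=1$ precisely when $o(xa)\leq 2$, and $|S|=2$ precisely when $o(xa)\geq 3$. So I would first record this equivalence, and then treat the two resulting regimes separately.

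When $|S|=1$, condition~(2) of Definition~\ref{D-G} rules out loops (equivalently, $a\notin\omega_\alpha(G)$), and $(xa)^2=1$ together with $\alpha(S)=S^{-1}$ makes the $1$-regular adjacency involutive. Consequently the graph is a perfect matching on $|G|$ vertices, which is clearly vertex-transitive. When $|S|=2$ I would exploit the fact that every element of $A_5$ has order in $\{1,2,3,5\}$, forcing $n:=o(xa)\in\{3,5\}$, and in particular $n$ is odd. Proposition~\ref{odd} then delivers $\GC(G,S,\alpha)\cong \frac{|G|}{2n}C_{2n}$, a disjoint union of $\frac{|G|}{2n}$ cycles of common length $2n$, whose automorphism group acts transitively on the vertex set.

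The proof is short because Proposition~\ref{odd} already shoulders the generic odd-$n$ case; the only verifications I would carry out carefully are the equivalence $a=b\Leftrightarrow(xa)^2=1$ and the observation that even orders $\geq 4$ simply do not arise for $xa$ in $A_5$. This last point is precisely why the corollary is stated for $A_5$: in a larger simple group $o(xa)$ could be even and at least $4$, in which case Proposition~\ref{odd} would not apply and one would need to construct explicit graph automorphisms to recover vertex-transitivity, so the genuine obstruction to generalising the statement sits exactly there.
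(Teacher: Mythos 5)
Your overall route matches the paper's: pin down $o(xa)$ and hand the work to Proposition~\ref{odd}, with an extra (harmless) degenerate case for $|S|=1$ that the paper does not bother to treat here. But there is one genuine gap. Your ``crucial dichotomy'' --- $a=b$ iff $(xa)^2=1$, hence $|S|=2$ precisely when $o(xa)\geq 3$ --- silently presupposes that $b=\alpha(a^{-1})$. The statement only gives $S=\{a,b\}$; condition~(3) of Definition~\ref{D-G} forces $\alpha(a^{-1})\in S$ and $\alpha(b^{-1})\in S$, so a priori there is a third regime your case split does not cover: $a\neq b$ with $\alpha(a^{-1})=a$ and $\alpha(b^{-1})=b$, i.e.\ two \emph{distinct} self-paired elements. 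In that regime $(xa)^2=(xb)^2=1$, so $o(xa)\leq 2$ even though $|S|=2$; your dichotomy fails there and Proposition~\ref{odd} is not applicable. The actual content of the paper's proof is to rule this regime out by counting: $|\omega_\alpha(G)|=|G|/|G_\alpha|=15$, and since $A_5$ has a single conjugacy class of involutions, $G_{-\alpha}\setminus\omega_\alpha(G)=\{x\}$ (this is Lemma~\ref{+1}); hence only one admissible self-paired element exists, and a two-element generalized Cayley subset must have the form $\{a,\alpha(a^{-1})\}$ with $\alpha(a^{-1})\neq a$.

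Once that count is inserted, your argument closes exactly as written: condition~(2) excludes $a\in\omega_\alpha(G)$, so $xa$ is neither trivial nor an involution, $o(xa)\in\{3,5\}$, and Proposition~\ref{odd} yields a disjoint union of cycles of common length $2n$, which is vertex-transitive. Your closing remark about why the hypothesis $G=A_5$ matters is essentially right, but note that the group-specific input is used twice, not once: both to keep $o(xa)$ odd and to guarantee the uniqueness of the self-paired element, and you only acknowledged the first use.
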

\begin{proof}
Suppose that $x=(1\,2)(3\,4)$. Therefore, $|\omega_\alpha(G)|=\frac{|G|}{|G_\alpha|}=15$ and
$G_{-\alpha}-\omega_\alpha(G)=\{x\}$. It implies that if $S=\{a,b\}$, then we must have $b=\alpha(a^{-1})\neq a$.
Since $S\in\Pi_\alpha(G)$, $\Pi_\alpha(G)\bigcap\omega_\alpha(G)=\emptyset$, it implies $xa$ can not be the conjugacy elements of $x$,
thus $o(xa)=3$ or $5$. By Proposition \ref{odd}, it follows $\GC(G,S,\alpha)\cong \frac{|G|}{2n}C_{2n}$, where $n=o(xa)$.
\end{proof}

\begin{remark}
Note that $\Cay(G,S)\cong\frac{|G|}{|\lg S\rg|}\Cay(\lg S\rg,S)$. Then all the 2-valent Cayley graphs on $A_5$ are $20C_3$ and $12C_5$. According to Corollary \ref{aa5}, we can obtain some other vertex-transitive graphs by generalized Cayley graphs except for the vertex-transitive graphs constructed by Cayley graphs on the group.
\end{remark}

\begin{proposition}\label{ncong}
Let $X_i=\GC(G,S_i,\alpha)$ and $S_i=\{a_i,\alpha(a_i^{-1})\}$ where $i=1,2$. Let $\alpha=\sigma_x$ for some $x\in G$. If $o(xa)\neq o(xb)$, then $X_1\ncong X_2$.
\end{proposition}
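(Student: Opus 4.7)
My plan is to extract $n_i := o(xa_i)$ from the isomorphism type of $X_i$ by analysing its cycle structure. Restricting (after handling the easy case $|S_1|\neq|S_2|$, which forces the valencies to differ) to $|S_1|=|S_2|=2$, each $X_i$ is a disjoint union of cycles, and any isomorphism $X_1\cong X_2$ must preserve the multiset of cycle lengths.

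I would first introduce the successor bijection $T_i\colon G\to G$ defined by $T_i(g)=\alpha(g)a_i=xgxa_i$, which moves a vertex along its $a_i$-edge. A short check shows that the two edges at $g$ go to $T_i(g)$ and $T_i^{-1}(g)$, so the $T_i$-orbits on $G$ coincide with the cycles of $X_i$. An induction in the same spirit as the proof of Proposition~\ref{odd} yields the explicit formulas
$$T_i^{2k}(g)=g(xa_i)^{2k},\qquad T_i^{2k+1}(g)=xg(xa_i)^{2k+1}.$$
Hence the cycle through $g$ has length equal to the smallest $\ell>0$ with $T_i^\ell(g)=g$: even $\ell$ reduce to $(xa_i)^\ell=1$, while odd $\ell$ require $(xa_i)^\ell=g^{-1}xg$, in particular forcing $(xa_i)^\ell$ to be an involution conjugate to $x$.

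A parity case split on $n_i$ then pins down the cycle lengths completely. If $n_i$ is odd, no odd $\ell$ can solve the odd equation since $(xa_i)^\ell$ has odd order, so every cycle has length $2n_i$ (this recovers Proposition~\ref{odd}). If $n_i\equiv 0\pmod 4$, no odd power of $xa_i$ is an involution, so every cycle again has length $n_i$. If $n_i\equiv 2\pmod 4$, the element $(xa_i)^{n_i/2}$ is an involution, and the cycle through $g$ has length $n_i/2$ precisely when $(xa_i)^{n_i/2}=g^{-1}xg$, and length $n_i$ otherwise. In each case the multiset of cycle lengths determines $n_i$, so $X_1\cong X_2$ forces $n_1=n_2$, contrary to hypothesis.

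The main delicate point is the near-collision $n_2=2n_1$ with $n_1$ odd, in which $X_1$ and the longest cycles of $X_2$ both have length $2n_1$. To break the tie I would invoke the simple-group setting: $(xa_2)^{n_1}$ is an involution and hence conjugate to $x$ in $G$, so shorter cycles of length $n_1$ genuinely appear in $X_2$ while being absent from $X_1$, ruling out the isomorphism.
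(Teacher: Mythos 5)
Your cycle-length analysis is correct and in fact more careful than the paper's own argument, which disposes of the odd--odd case via Proposition~\ref{odd} and then, for the even case, merely asserts that the cycle lengths are $m_i$ or $m_i/2$ and declares the conclusion without examining possible coincidences. Your successor map, the formulas $T_i^{2k}(g)=g(xa_i)^{2k}$ and $T_i^{2k+1}(g)=xg(xa_i)^{2k+1}$, and the three-way parity split are all right, and you correctly isolate the only possible collision of cycle-length multisets, namely $n_1$ odd against $n_2=2n_1\equiv 2\pmod 4$ with no short cycles. This is a genuine improvement in rigor over the printed proof.

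The gap is in your tie-breaker: the claim that ``$(xa_2)^{n_1}$ is an involution and hence conjugate to $x$ in $G$'' is false for a general finite nonabelian simple group, since such groups may have several conjugacy classes of involutions. The odd equation $xg(xa_2)^{n_1}=g$ is solvable for some $g$ if and only if $(xa_2)^{n_1}$ lies in the conjugacy class of $x$, not merely if it is an involution. Concretely, in $G=A_8$ take $x=(1\,2)(3\,4)$, $xa_1=(5\,6\,7)$ and $xa_2=(1\,2\,3\,4\,5\,6)(7\,8)$: then $(xa_2)^3$ has cycle type $2^4$ and is not conjugate to $x$, so both graphs are disjoint unions of $6$-cycles and are isomorphic even though $o(xa_1)=3\neq 6=o(xa_2)$ (and $S_2$ is a legitimate generalized Cayley subset, since neither $xa_2$ nor $a_2^{-1}x$ is conjugate to $x$). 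So the statement needs, and in all of the paper's applications actually has, the extra hypothesis $C_G(2)=1$, i.e.\ a single class of involutions; under that hypothesis your argument closes: $(xa_2)^{n_1}$ is then conjugate to $x$, the solutions $g$ form a proper coset of the centralizer of $x$, and $X_2$ acquires cycles of length $n_1$ that $X_1$ lacks while retaining some of length $2n_1$. With that hypothesis made explicit your proof is complete; without it the proposition itself fails, a point the paper's own proof passes over in silence.
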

\begin{proof}
Let $o(xa_1)=m_1$ and $o(xa_2)=m_2$.
We assume that $m_1$ and $m_2$ are different odds, then $X_1\ncong X_2$ by Proposition \ref{odd}.
If $m_i$ $(i=1,2)$ is even, by the proof of Proposition \ref{odd}, then we can see that the length of all the cycles in $X_i$ is $m_i$ when $g(xa)^{\frac{m_i}{2}}\neq g$ or $\frac{m_i}{2}$ when $g(xa)^{\frac{m_i}{2}}=g$. It follows that if at least one of
$m_1$ and $m_2$ are evens, then $X_1\ncong X_2$.
\end{proof}

\begin{lemma}\label{cycle}
Let $\alpha\in\Aut(G)$ be an involution for a simple and finite group $G$. Suppose $X=\GC(G,S,\alpha)$ and $S=\{a,b\}$ in which $\alpha(a^{-1})=a$ and $\alpha(b^{-1})=b$. Then
the cycle on any vertex $g$ is like
  $$g-\alpha(g)a-ga^{-1}b-\cdots-\alpha(g)b-g $$ and $\alpha(g)ab^{-1}a\cdots ab^{-1}a=\alpha(g)b$.
  Moreover, $\GC(G,S,\alpha)\cong\Cay(G,\{a^{-1}b,b^{-1}a\})$.

\end{lemma}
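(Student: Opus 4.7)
The plan is to verify the described cycle by iteratively tracing neighbours using the adjacency rule of $\GC(G,S,\alpha)$, and then to read off the isomorphism from the cosets these cycles traverse.

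First I would unpack the hypotheses: $\alpha(a^{-1})=a$ and $\alpha(b^{-1})=b$ combined with $\alpha^{2}=1$ give $\alpha(a)=a^{-1}$ and $\alpha(b)=b^{-1}$. Setting $c:=a^{-1}b$, so that $c^{-1}=b^{-1}a$, applying $\alpha$ yields $\alpha(c)=ab^{-1}$ together with the conjugation identity $ab^{-1}=ac^{-1}a^{-1}$. Hence $(ab^{-1})^{k}=ac^{-k}a^{-1}$, so $(ab^{-1})^{k}a=ac^{-k}$ and $(ab^{-1})^{k}b=ac^{1-k}$; these identities will drive the induction in the next step.

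Next, from the defining adjacency of $\GC$, the two neighbours of any vertex $v$ are $\alpha(v)a$ and $\alpha(v)b$. Starting at $v_{0}=g$ and taking $v_{1}=\alpha(g)a$, I would prove by induction on $k\ge 0$ that $v_{2k}=gc^{k}$ and $v_{2k+1}=\alpha(g)ac^{-k}$. The inductive step at $v_{2k}=gc^{k}$ uses $\alpha(gc^{k})=\alpha(g)(ab^{-1})^{k}$ together with the identities above: the two neighbours work out to $\alpha(g)ac^{-k}=v_{2k+1}$ (the new vertex) and $\alpha(g)ac^{1-k}=v_{2k-1}$ (the already-visited predecessor). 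A symmetric calculation at the odd-indexed vertex returns $v_{2k+2}=gc^{k+1}$, closing the induction. The cycle then closes when $c^{k}=1$, namely at length $L=2\,o(a^{-1}b)$, and at that point $v_{L-1}=\alpha(g)ac^{-(o(c)-1)}=\alpha(g)ac=\alpha(g)b$. This reproduces the stated cycle $g-\alpha(g)a-ga^{-1}b-\cdots-\alpha(g)b-g$, and the product identity $\alpha(g)\,a(b^{-1}a)^{o(c)-1}=\alpha(g)b$ then reduces to $a(b^{-1}a)^{o(c)-1}=ac^{-(o(c)-1)}=ac=b$.

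For the concluding isomorphism with $\Cay(G,\{a^{-1}b,b^{-1}a\})$, the description above shows that the even-indexed vertices of the $\GC$-cycle fill the left coset $g\langle c\rangle$ while the odd-indexed vertices fill $\alpha(g)a\langle c\rangle$; within each coset, consecutive same-parity vertices differ by right-multiplication by $c^{\pm 1}$, which is precisely the adjacency of $\Cay(G,\{c,c^{-1}\})$. The plan is to use this coset picture to define an explicit bijection $\phi\colon V(\GC)\to V(\Cay)$ matching the $\GC$-traversal to the corresponding $\Cay$-cycles in these cosets, and then to check edge preservation directly from the neighbour formulas. The hard part will be arranging this matching globally consistently across all $\GC$-cycles so that the alternation via $\alpha$ between the two cosets is compatible with the Cayley adjacency; this reconciliation leans on $\alpha$ being involutory together with $a,b\in G_{-\alpha}$, and is the crux of the argument.
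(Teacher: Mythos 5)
Your treatment of the cycle itself is correct and is essentially the paper's argument made precise: the paper likewise traces neighbours via $v\mapsto\alpha(v)a,\ \alpha(v)b$, observes the alternation between vertices of the form $g(a^{-1}b)^{k}$ and $\alpha(g)(ab^{-1})^{k}a$, and argues that the vertex preceding $g$ must be $\alpha(g)(ab^{-1})^{k}a=\alpha(g)b$. Your closed forms $v_{2k}=gc^{k}$, $v_{2k+1}=\alpha(g)ac^{-k}$ with $c=a^{-1}b$ and the identity $(ab^{-1})^{k}=ac^{-k}a^{-1}$ are a cleaner way to run that computation. One small omission: you should also rule out the walk first returning to $g$ at an odd step (which would give an odd cycle); but $v_{2j+1}=g$ forces, after applying $\alpha$ and rearranging, $gc^{j}=\alpha(g)a$, i.e.\ $v_{2j}=v_{1}$, impossible on a cycle of length at least $3$. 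The paper handles the analogous parity point with its short contradiction argument about $g\alpha(b)a=g$.

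The genuine gap is the final clause, which you explicitly leave as a plan (``the hard part'', ``the crux''). That step cannot be completed, because the asserted isomorphism is false, and your own computation shows why. You have proved that every component of $\GC(G,S,\alpha)$ is a cycle of length $2\,o(a^{-1}b)$, whose even-indexed vertices fill the coset $g\langle a^{-1}b\rangle$ and whose odd-indexed vertices fill the disjoint coset $\alpha(g)a\langle a^{-1}b\rangle$. With the paper's convention, the components of $\Cay(G,\{a^{-1}b,b^{-1}a\})$ are exactly the cosets of $\langle a^{-1}b\rangle$, i.e.\ cycles of length $n=o(a^{-1}b)$ (a perfect matching when $n=2$). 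So one graph is $\frac{|G|}{2n}C_{2n}$ and the other is $\frac{|G|}{n}C_{n}$; these are never isomorphic, and for $n$ odd the first is bipartite while the second is not. What your coset picture actually yields is a $2$-to-$1$ covering of each Cayley component by a $\GC$ component, not a bijection, so no global matching of the kind you propose can exist. The correct conclusion to extract from the first part is $\GC(G,S,\alpha)\cong\frac{|G|}{2n}C_{2n}$. Note that the paper's own proof is entirely silent on the ``Moreover'' clause, so you should not try to reverse-engineer an argument for it; the honest course is to flag it as unprovable as stated.
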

\begin{proof}
Note that $\{g,\alpha(g)s\}\in E(X)$ for
any $g\in G$, where $s=a$ or $b$.
If $a=\alpha(a^{-1})$ and $b=\alpha(b^{-1})$, then the other vertex adjecent to $\alpha(g)a$ is $g\alpha(a)b$ as $g\alpha(a)a=g$.
Similarly, the other vertex adjecent to $g\alpha(a)b$ is $\alpha(g)a\alpha(b)a$.
So the cycle is like
$$g-\alpha(g)a-ga^{-1}b-\alpha(g)ab^{-1}a-ga^{-1}ba^{-1}b-\cdots-\alpha(g)b-g.$$
Then $ga^{-1}b\cdots a^{-1}b=\alpha(g)b$ or $\alpha(g)ab^{-1}a\cdots ab^{-1}a=\alpha(g)b$. If
$ga^{-1}b\cdots a^{-1}b=\alpha(g)b$, note that the next vertex connecting this vertex is $g$, then
$\alpha(ga^{-1}b\cdots a^{-1}b)a=g$, this implies that $g\alpha(b)a=g$, which is a contradiction.
\end{proof}

Now we focus on the local $m$-GCI-groups for $m=1$ or $2$. We display the description of $1$-GCI-groups.

\begin{theorem}
Let $G$ be simple and finite. If it is a local 1-GCI-group, then $G$ satisfies conditions:
\begin{enumerate}
  \item $F_G(2)=1$;
  \item for any outer involutory automorphism $\alpha$, $G_{-\alpha}=\omega_\alpha(G)$.
\end{enumerate}
\end{theorem}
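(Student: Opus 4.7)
The plan is to prove each of (1) and (2) by contrapositive: if either fails, I will produce two isomorphic $1$-valent generalized Cayley graphs on $G$ whose defining involutory automorphisms are not conjugate in $\Aut(G)$, contradicting the local $1$-GCI property. The driving observation is that any $1$-valent GC graph $\GC(G,\{a\},\alpha)$ with $a\in G_{-\alpha}\setminus\omega_\alpha(G)$ is a perfect matching $\tfrac{|G|}{2}K_2$: each vertex $g$ has a unique neighbor $\alpha(g)a$, which differs from $g$ precisely because of the condition $a\notin\omega_\alpha(G)$. Hence any two $1$-valent GC graphs on $G$ are automatically isomorphic, and local $1$-GCI forces their defining automorphisms to be conjugate in $\Aut(G)$.

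For (1), suppose $F_G(2)\geq 2$ and pick involutions $x,y\in G$ in distinct fusion classes. Since $G$ is nonabelian simple, $Z(G)=1$, so $\sigma_x^\gamma=\sigma_{\gamma(x)}$ for all $\gamma\in\Aut(G)$; thus non-fusion of $x,y$ upgrades to non-conjugacy of $\sigma_x,\sigma_y$ in $\Aut(G)$. A direct count via the bijection $g\leftrightarrow xg$ between $G_{-\sigma_x}$ and $\{1\}\cup I$ (where $I$ denotes the involutions of $G$) gives $|G_{-\sigma_x}|=1+|I|$, while $|\omega_x(G)|=|x^G|<|I|$ since $C_G(2)\geq 2$. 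Hence $G_{-\sigma_x}\setminus\omega_x(G)$ contains some element $a$, automatically non-identity because $1\in\omega_x(G)$; similarly one finds $b\in G_{-\sigma_y}\setminus\omega_y(G)$. Then $\GC(G,\{a\},\sigma_x)\cong\GC(G,\{b\},\sigma_y)$ is an isomorphism whose defining automorphisms are not conjugate, contradicting local $1$-GCI.

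For (2), suppose some outer involutory $\alpha$ satisfies $G_{-\alpha}\supsetneq\omega_\alpha(G)$ and pick $a\in G_{-\alpha}\setminus\omega_\alpha(G)$ to form $\GC(G,\{a\},\alpha)$. By the Feit--Thompson theorem $G$ has an involution $x$; the count above (or Lemma \ref{+1} in the case $C_G(2)=1$) gives $G_{-\sigma_x}\supsetneq\omega_x(G)$, so there is a suitable $b$ with graph $\GC(G,\{b\},\sigma_x)$. Since $\Inn(G)\trianglelefteq\Aut(G)$, the outer $\alpha$ cannot be conjugate to the inner $\sigma_x$ in $\Aut(G)$, and the isomorphism of the two perfect matchings again contradicts local $1$-GCI. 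The main (modest) hurdle throughout is the existence check, namely that $G_{-\cdot}\setminus\omega_\cdot(G)$ contains a non-identity element in each construction: for outer $\alpha$ this is precisely the failure hypothesis of (2), while for inner $\sigma_x$ it follows from $1\in\omega_x(G)$ together with $|G_{-\sigma_x}|-|\omega_x(G)|=1+|I|-|x^G|\geq 1$, strict once $C_G(2)\geq 2$. Beyond this, the argument reduces to the perfect-matching observation and the standard facts $Z(G)=1$ and $\Inn(G)\trianglelefteq\Aut(G)$.
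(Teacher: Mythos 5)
Your proposal is correct and follows essentially the same route as the paper's own (much terser) proof: every $1$-valent generalized Cayley graph is a perfect matching, so all such graphs are isomorphic, and the local $1$-GCI condition then forces the defining involutory automorphisms to be pairwise conjugate in $\Aut(G)$, which rules out both a second fusion class of involutions and any outer involutory $\alpha$ with $G_{-\alpha}\neq\omega_\alpha(G)$. Your explicit existence checks for elements of $G_{-\sigma_x}\setminus\omega_x(G)$ are a welcome addition that the paper glosses over.
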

\begin{proof}
Suppose $G$ is a local 1-GCI-group. Let $\GC(G,S_1,\alpha)\cong\GC(G,S_2,\beta)$, where $|S_i|=1\,(i=1,2)$ and $\alpha$ be an involutory inner automorphism. Then $\beta$ is either an involutory inner automorphisms or an outer automorphism.
If $\beta$ is an inner automorphism, then the corresponding involutions are conjugate in $\Aut(G)$. If $\beta$ is an outer automorphism, then there is no $\GC(G,S_2,\beta)$, where $|S_2|=1$, otherwise it will be a contradiction. It implies $G_{-\beta}=\omega_\beta(G)$.
\end{proof}

\begin{theorem}\label{results-1}
There is no $m$-GCI-groups in finite nonabelian simple groups.
\end{theorem}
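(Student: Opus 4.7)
The distinction between an $m$-GCI-group and a local $m$-GCI-group in Definition \ref{mgci} is that the former also permits the identity to play the role of $\alpha_1$ or $\alpha_2$, so that ordinary Cayley graphs $\Cay(G,S) = \GC(G,S,1)$ are covered. The plan is therefore to show that every finite nonabelian simple group $G$ admits a graph with two generalized Cayley representations: one with $\alpha_1 = 1$ and one with a nontrivial involutory $\alpha_2$. Because $1^\gamma = 1$ for every $\gamma \in \Aut(G)$, the required equality $\alpha_2 = \alpha_1^\gamma$ is then impossible, and $G$ fails the $m$-GCI condition. It suffices to do this at the one-valent level, since any counterexample with $|S_i|\leq 1$ serves for every $m \geq 1$.

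\textbf{Construction.} Since $G$ is a nonabelian finite simple group, $|G|$ is even and $G$ contains an involution $x$; set $\alpha := \sigma_x$. Then $\alpha \neq 1$ (as $Z(G) = 1$), and $\alpha^2(g) = x(xgx)x = g$ gives $\alpha^2 = 1$. Take $S = \{x\}$ and verify the remaining conditions of Definition \ref{D-G}. Condition (3) reduces to $\alpha(x) = x^{-1}$, which holds because $\alpha(x) = xxx = x = x^{-1}$. Condition (2) requires $x \notin \omega_\alpha(G) = \{xgxg^{-1} : g \in G\}$; but $x = xgxg^{-1}$ would force $gxg^{-1} = 1$, hence $x = 1$, a contradiction. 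So $\GC(G, \{x\}, \sigma_x)$ is well-defined, and its edges are the pairs $\{g,h\}$ with $\alpha(g^{-1})h = x$, i.e.\ $h = xg$. These are exactly the edges of $\Cay(G, \{x\}) = \GC(G, \{x\}, 1)$, so the two representations yield the same graph.

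\textbf{Finish and obstacle.} Applying Definition \ref{mgci} to $X_1 = \GC(G, \{x\}, 1)$ and $X_2 = \GC(G, \{x\}, \sigma_x)$, which are isomorphic with $|S_i| = 1 \leq m$, would demand some $\gamma \in \Aut(G)$ with $\sigma_x = \alpha_2 = \alpha_1^\gamma = 1$, which is absurd. Hence $G$ is not an $m$-GCI-group for any $m \geq 1$. There is no serious obstacle: the entire argument rests on the observation that Definition \ref{mgci} treats the identity rigidly via the constraint $\alpha_2 = \alpha_1^\gamma$, together with the elementary construction above. A $m \geq 2$ variant using Lemma \ref{cycle} would instead exhibit $\GC(G,\{a,b\},\alpha) \cong \Cay(G,\{a^{-1}b,b^{-1}a\})$ whenever $a,b$ satisfy $\alpha(a^{-1}) = a$ and $\alpha(b^{-1}) = b$, but the one-valent construction above already handles every $m \geq 1$ uniformly.
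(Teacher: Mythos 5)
Your proposal is correct and follows essentially the same route as the paper: both exhibit, for an involution $x$, the pair $\GC(G,\{x\},1)\cong\frac{|G|}{2}K_2\cong\GC(G,\{x\},\sigma_x)$ and observe that the required condition $\alpha_2=\alpha_1^\gamma$ cannot hold since $\sigma_x\neq 1=1^\gamma$. Your explicit verification that $\{x\}$ satisfies conditions (2) and (3) of Definition \ref{D-G} for $\alpha=\sigma_x$ is a welcome addition where the paper instead appeals to Lemma \ref{+1}.
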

\begin{proof}
By Definition \ref{D-G}, for all $k\leq m$, a $m$-GCI-group is a $k$-GCI-group. And $C(G,2)\geq2$ for any group $G$ which is simple and finite.  Therefore, there exist $\GC(G,\{x\},\sigma_g)$ and $\GC(G,\{x\},1)$  for any involution $x\in G$ according to Lemma \ref{+1}. Since $\GC(G,\{x\},1)\cong\frac{|G|}{2}K_2\cong\GC(G,\{x\},\sigma_x)$ and $G$ is not generalized Cayley isomorphism, we have $G$ is not 1-GCI-group.
\end{proof}

\begin{lemma}\label{A5}
$A_5$ is local $2$-GCI-group.
\end{lemma}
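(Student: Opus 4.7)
The plan is to use the transitivity of $\Delta_\alpha$ on $\Omega_\alpha^*(A_5,\cdot)$ from Theorem \ref{main1} as the engine. Given isomorphic generalized Cayley graphs $X_i=\GC(A_5,S_i,\alpha_i)$ with involutory $\alpha_i$ and $|S_i|\leq 2$, I would first normalize the pair $(\alpha_1,\alpha_2)$ to a common automorphism. Because $A_5$ has a unique conjugacy class of involutions (so $C_G(2)=1$), any two inner involutory automorphisms $\sigma_x,\sigma_y$ are conjugate in $\Inn(A_5)\leq\Aut(A_5)$; applying a preliminary inner conjugation therefore reduces the inner case to $\alpha_1=\alpha_2=\alpha=\sigma_x$ with $x=(1\,2)(3\,4)$ fixed.

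With $\alpha$ fixed, I would enumerate the admissible $S$. Direct computation gives $|G_\alpha|=|C_{A_5}(x)|=4$, hence $|\omega_\alpha(A_5)|=15$, and by Lemma \ref{+1}, $|G_{-\alpha}|=16$, so $G_{-\alpha}\setminus\omega_\alpha(A_5)=\{x\}$. This forces $S=\{x\}$ in the $|S|=1$ case (giving the unique graph $30K_2$), while for $|S|=2$ we get $S=\{a,\alpha(a^{-1})\}$ with $n:=o(xa)\in\{3,5\}$ by Corollary \ref{aa5}; Proposition \ref{odd} then yields $X\cong\frac{60}{2n}C_{2n}$, either $10C_6$ or $6C_{10}$, and Proposition \ref{ncong} rules out isomorphism across the two cases, so $X_1\cong X_2$ forces the same $n$. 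Theorem \ref{main1} now furnishes $\sigma_g\gamma\in\Delta_\alpha$ carrying $\{xa_1,a_1^{-1}x\}$ to $\{xa_2,a_2^{-1}x\}$ in $\Omega_\alpha^*(A_5,n)$. Via Theorem \ref{transform} and the permutation isomorphism $\ell_\alpha\cong\Delta_\alpha$ on the corresponding orbits, the associated $(g,\gamma)\in\ell_\alpha$ satisfies $\{a_2,\alpha(a_2^{-1})\}=\{a_1,\alpha(a_1^{-1})\}^{(g,\gamma)}$, which unpacks to $\alpha^\gamma(g)S_1^\gamma g^{-1}=S_2$ with $\alpha_2=\alpha=\alpha^\gamma$ (the latter since $\gamma\in\Aut(A_5)_\alpha$)—precisely the condition of Definition \ref{mgci}(2).

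The main obstacle will be the outer involutory automorphisms. An outer $\beta\in\Aut(A_5)\setminus\Inn(A_5)$ of order $2$ is not conjugate in $\Aut(A_5)=S_5$ to any inner involution (cycle types $(i\,j)$ versus $(i\,j)(k\,l)$), so the reduction above does not merge the two families. One must therefore (i) show that no outer-$\beta$ generalized Cayley graph with $|S|\leq 2$ is isomorphic to any inner-$\alpha$ graph with $|S|\leq 2$—this requires a structural analysis of the cycle-length spectrum of $\GC(A_5,\{a,\beta(a^{-1})\},\beta)$, controlled by $o(\beta(a)a)$ in analogy with Proposition \ref{odd}—and (ii) verify the generalized Cayley isomorphism condition within the outer family via an analogous transitivity argument. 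Since $|A_5|=60$ both verifications reduce to a small finite case check, feasible by hand or Magma in the spirit of Lemmas \ref{L27} and \ref{L34}; once carried out, they complete the proof.
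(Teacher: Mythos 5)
Your treatment of the inner case is essentially the paper's own argument: normalize to $\alpha=\sigma_x$ with $x=(1\,2)(3\,4)$, use $|G_{-\alpha}\setminus\omega_\alpha(A_5)|=1$ to pin down the valency-1 graph, use Proposition \ref{odd}/Corollary \ref{aa5} to get $10C_6$ and $6C_{10}$ as the only valency-2 inner graphs, and then invoke the transitivity of $\Delta_\alpha$ on $\Omega_\alpha^*(A_5,n)$ together with Theorem \ref{transform} to convert graph isomorphism into a generalized Cayley isomorphism. That part is sound and matches the paper.

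The gap is exactly where you flag it: the outer involutory automorphism $\beta$ (induced by a transposition such as $(1\,2)$) is not handled, only announced as ``feasible by hand or Magma.'' This is not a throwaway step --- it is the half of the proof that the paper itself cannot do by pure fusion-class arguments and must settle computationally. Concretely, three things remain unproved in your write-up: (i) that there is no valency-1 graph for $\beta$ (this needs $G_{-\beta}=\omega_\beta(A_5)$, which holds here because both sets have size $10$, but fails for $A_6$ in Lemma \ref{A6}, so it genuinely has to be checked); (ii) that the valency-2 graphs for $\beta$ fall into exactly two isomorphism classes ($15C_4$, occurring $15$ times, and $2C_3\cup 9C_6$, occurring $10$ times), none isomorphic to $10C_6$ or $6C_{10}$; and (iii) that the generalized-Cayley-isomorphism relation within the outer family --- which the paper reduces to $\Aut(A_5)$-equivalence of the translated sets $(1\,2)S$ --- has exactly two orbits, of sizes $15$ and $10$, matching the two isomorphism classes. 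Without (iii) in particular, one could have two isomorphic outer graphs that are not GCI-equivalent, which would make $A_5$ fail to be a local 2-GCI-group; your proposal gives no argument ruling this out. The paper closes all three points by Magma computation, so until you actually carry out that finite check (or replace it with a structural argument for $\beta$ analogous to Proposition \ref{odd}), the proof is incomplete.
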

\begin{proof}
Let $x=(1\,2)(3\,4)$ and $y=(1\,2)$. Then $x$ (rep. $y$) reduces a involutory automorphism $\alpha$ (rep. $\beta$) of $A_5$, where $\alpha=\sigma_x$. Let $G=A_5$. Then $x\in G$ and there exists the unique $\GC(G,\{x\},\alpha)$ of valency 1 by Lemma \ref{+1} respecting to $\alpha$. Furthermore, it is the unique generalized Cayley graph of valency 1 of $G$.
To graphs of valency 2, if $X_1\cong X_2$, then $o(xa)=o(xb)$ by Proposition \ref{ncong}, so there is $\delta\in\Aut(G)$ such that Theorem \ref{transform} is valid as $F_G(n)\leq 2$ for any $n\big{|}|G|$ by \cite[Theorem 1.3]{Lich-CP-1}.

For the graphs reduced by $\beta$ of valency 2, we can find that there are 15 generalized Cayley graphs isomorphic to $15C_4$ and 10 generalized Cayley graphs isomorphic to $2C_3\cup9C_6$ by Magma \cite{BCP}.
Let $\GC(G,S_1,\beta)\cong\GC(G,S_2,\beta)$.
If $\beta(g)S_1^\delta g^{-1}=S_2$, it implies $g((1\,2)S_1)^\delta g^{-1}=(1\,2)S_2$, that is $((1\,2)S_1)^\sigma=(1\,2)S_2$, where $\sigma\in\Aut(G)$, which means the question transforms to whether two subsets $(1\,2)S_1$ and $(1\,2)S_2$ are equivalent. By Magma \cite{BCP}, we find that there are two orbits of such subsets, one is of length 15 and another is 10, which coincides with fifteen $15C_4$s and ten $2C_3\cup9C_6$s respectively.

By Corollary \ref{aa5}, we can see that $\GC(G,\{a,\alpha(a^{-1})\},\alpha)\cong \frac{|G|}{2n}C_{2n}$, where $o(xa)=n$, for any $a$ respect to $A_5$.
It follows that any two graphs about $\alpha$ and $\beta$ respectively are not isomorphic to each other as $n$ can not be $2$ since those elements are in $\omega_\alpha(G)$. Therefore, $A_5$ is local.
\end{proof}

\begin{lemma}\label{gci-necessary}
Suppose $G$ is simple and finite, and $C(G,2)\geq1$. Let $G$ be a local 2-GCI-group. Then for any $n\big{|}|G|$, $\Delta_\alpha$ is transitive on $\Omega_\alpha^*(G,n)$.
\end{lemma}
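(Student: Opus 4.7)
The plan is to realize ``transitivity of $\Delta_\alpha$ on $\Omega_\alpha^*(G,n)$'' as the statement that a certain family of valency-at-most-2 generalized Cayley graphs has a single abstract isomorphism type, and then use local 2-GCI as a bridge from abstract isomorphism back to $\Delta_\alpha$-conjugacy on $\Omega_\alpha^*(G,n)$ via the equivariance between $\ell_\alpha \curvearrowright \Pi_\alpha(G)$ and $\Delta_\alpha \curvearrowright \Omega_\alpha^*(G,n)$.

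First I would fix an involution $x \in G$ (available since $C_G(2) \geq 1$), set $\alpha = \sigma_x$, pick any $n \mid |G|$, and take arbitrary $T_1, T_2 \in \Omega_\alpha^*(G,n)$. Writing $T_i = \{xa_i, a_i^{-1}x\}$ with $o(xa_i) = n$, I would set $S_i := \{a_i, \alpha(a_i^{-1})\}$. The map $\lambda$ from Theorem \ref{transform} and the corollary following it shows that $S_i \in \Pi_\alpha(G)$, so conditions (1)--(3) of Definition \ref{D-G} are satisfied and $X_i := \GC(G, S_i, \alpha)$ is a legitimate generalized Cayley graph with $|S_i| \leq 2$.

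Next I would establish $X_1 \cong X_2$ as abstract graphs, dividing on $n$. If $n = 2$, then $a_i \in G_{-\alpha} \setminus \omega_\alpha(G)$ forces $S_i = \{a_i\}$, and $X_i$ is a perfect matching, so $X_1 \cong \frac{|G|}{2}K_2 \cong X_2$. If $n$ is odd and $n \geq 3$, Proposition \ref{odd} directly gives $X_i \cong \frac{|G|}{2n}C_{2n}$. If $n$ is even and $n \geq 4$, the walk from a vertex $g$ alternates the two elementary step types, and two consecutive such steps act as right-multiplication by $xa_i$; the cycle decomposition is therefore controlled by $n = o(xa_i)$ together with whether the involution $(xa_i)^{n/2}$ is conjugate to $x$ in $G$. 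With $X_1 \cong X_2$ in hand, the local 2-GCI hypothesis applied to the common involutory automorphism $\alpha$ yields $g \in G$ and $\gamma \in \Aut(G)$ with $\alpha^\gamma = \alpha$ (so $\gamma \in \Aut(G)_\alpha$) and $\alpha(g)\,S_1^\gamma\,g^{-1} = S_2$, i.e.\ $S_1^{(g,\gamma)} = S_2$ for $(g,\gamma) \in \ell_\alpha$. The $\ell_\alpha$--$\Delta_\alpha$ equivariance then exhibits $\sigma_g\gamma \in \Delta_\alpha$ sending $T_1$ to $T_2$, which is exactly the transitivity claimed.

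The principal technical obstacle is the even case $n \geq 4$ in the middle step: ensuring that $X_1 \cong X_2$ depends only on $n$ and not on finer invariants of $a_i$. In the odd and $n = 2$ cases this uniformity is automatic, but for even $n \geq 4$ the graph may split into cycles of length $n$ or of length $n/2$ according to whether $g(xa_i)^{n/2} = g$ occurs for some $g$, equivalently whether $(xa_i)^{n/2}$ is conjugate to $x$. One must argue that this conjugacy-class datum is the same for all $a_i$ with $o(xa_i) = n$, drawing on the conjugacy-class structure of involutions in the simple group $G$; alternatively one can bypass the difficulty by, for each pair of presumed-distinct $\Delta_\alpha$-orbits in $\Omega_\alpha^*(G,n)$, engineering a pair of GCGs whose abstract isomorphism can be verified directly and whose analysis still feeds into the local 2-GCI hypothesis to force the orbits to coincide.
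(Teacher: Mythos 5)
Your proposal follows the same route as the paper's proof: encode each pair $T_i=\{xa_i,a_i^{-1}x\}\in\Omega_\alpha^*(G,n)$ as a connection set $S_i=\{a_i,\alpha(a_i^{-1})\}$, establish $X_1\cong X_2$ for $X_i=\GC(G,S_i,\alpha)$, invoke the local $2$-GCI hypothesis to obtain $(g,\gamma)\in\ell_\alpha$ with $S_1^{(g,\gamma)}=S_2$, and transfer this through Theorem \ref{transform} to an element $\sigma_g\gamma\in\Delta_\alpha$ carrying $T_1$ to $T_2$. The last two steps are verbatim the paper's argument.

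Where you differ is that you attempt to justify the middle step, and you are right to worry about it: the paper's proof simply opens with ``If $X_1\cong X_2$'' and never verifies that two pairs of the same order $n$ always yield isomorphic graphs, which is what transitivity on all of $\Omega_\alpha^*(G,n)$ requires. Your treatment of $n$ odd (via Proposition \ref{odd}) and of $n=2$ (perfect matchings) is correct, and for $n\equiv 0\pmod 4$ one can check from the walk $g,\ xg(xa),\ g(xa)^2,\dots$ that a cycle cannot close at an odd step, so $X\cong\frac{|G|}{n}C_n$ depends only on $n$. The genuinely delicate case is $n\equiv 2\pmod 4$ with $n\geq 6$: there the cycle through $g$ closes at length $n/2$ exactly when $(xa)^{n/2}=g^{-1}xg$, so the isomorphism type records whether the involution $(xa)^{n/2}$ is conjugate to $x$, and this invariant can in principle separate two members of $\Omega_\alpha^*(G,n)$; for such a pair the local $2$-GCI hypothesis then yields no conclusion. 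Neither your proposal nor the paper closes this case --- your ``alternatively'' at the end is a plan rather than an argument --- so you have reproduced the paper's approach and, in flagging the even case, correctly located a lacuna that the paper's own one-line treatment of this step leaves open.
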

\begin{proof}
Let $G$ be local 2-GCI-group. Let $X_i=\GC(G,S_i,\alpha)$, where $S_i=\{a_i,\alpha(a_i^{-1})\},\,(i=1,2)$. If $X_1\cong X_2$, then  we have $g\in G$ and $\gamma\in\Aut(G)_\alpha$ such that $S_2=\alpha^\gamma(g)S_1^\gamma g^{-1}$. It implies $S_2=S_1^{(g,\gamma)}$. By Theorem \ref{transform}, we have $\{xa,a^{-1}x\}^{\sigma_g\gamma}=\{xb,b^{-1}x\}$ and $o(xa)=o(xb)$. Therefore, for all $n$, $\Delta_\alpha$ is transitive on $\Omega_\alpha^*(G,n)$ .
\end{proof}



\begin{theorem}\label{results-2}
Suppose $G$ is simple. Then $G$ is local 2-GCI-group if and only if $G$ is one of groups as follows:
$$A_5, L_2(8), M_{11}, Sz(8), M_{23}.$$
\end{theorem}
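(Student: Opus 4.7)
The plan is to prove both directions of the biconditional: the necessity amounts to combining Lemma \ref{gci-necessary} with Theorem \ref{main1} and then eliminating three candidates via the explicit counterexamples of Section \ref{example}, while the sufficiency needs a case analysis group by group, of which $A_5$ is already handled by Lemma \ref{A5}.

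For necessity, assume $G$ is simple, finite and a local 2-GCI-group. By Lemma \ref{gci-necessary}, $\Delta_\alpha$ acts transitively on $\Omega_\alpha^*(G,n)$ for every involutory automorphism $\alpha$ and every $n\big{|}|G|$. Theorem \ref{main1} then forces $G$ to lie in the list $\{A_5, L_2(7), A_6, L_2(8), M_{11}, L_3(4), Sz(8), M_{23}\}$. Lemma \ref{A6} shows that $A_6$ is not even a local 1-GCI-group, hence fails to be local 2-GCI, while Lemmas \ref{L27} and \ref{L34} exhibit explicit pairs of isomorphic generalized Cayley graphs on $L_2(7)$ and $L_3(4)$ that are not generalized Cayley isomorphic. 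This leaves exactly the five groups in the statement.

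For sufficiency, $A_5$ is Lemma \ref{A5}. For each of $L_2(8), M_{11}, Sz(8), M_{23}$ I would use two structural facts read from the ATLAS \cite{CCNPW}: $G$ has a unique conjugacy class of involutions, so $C_G(2)=1$, and ${\rm Out}(G)$ contains no involutions, namely ${\rm Out}(M_{11})={\rm Out}(M_{23})=1$ and ${\rm Out}(L_2(8))\cong{\rm Out}(Sz(8))\cong\mathbb{Z}_3$. Consequently every involutory $\alpha\in\Aut(G)$ is inner of the form $\sigma_x$ with $x$ in that unique class, and all such $\alpha$ are conjugate in $\Aut(G)$. Given isomorphic $X_i=\GC(G,S_i,\alpha_i)$ with $|S_i|\leq 2$, an initial conjugation by some $\gamma_0\in\Aut(G)$ with $\alpha_2=\alpha_1^{\gamma_0}$ reduces to the case $\alpha_1=\alpha_2=\alpha=\sigma_x$. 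For valency $1$, Lemma \ref{+1} together with $C_G(2)=1$ forces $S_i=\{x\}$ uniquely, so the required $(g,\gamma)\in\ell_\alpha$ is immediate. For valency $2$ with $S_i=\{a_i,\alpha(a_i^{-1})\}$ and $a_i\neq\alpha(a_i^{-1})$, Proposition \ref{ncong} forces $o(xa_1)=o(xa_2)=n$, whereupon the transitivity of $\Delta_\alpha$ on $\Omega_\alpha^*(G,n)$ supplied by Theorem \ref{main1} together with the translation dictionary of Theorem \ref{transform} delivers a $\sigma_g\gamma$, and hence $(g,\gamma)\in\ell_\alpha$, satisfying $\alpha^\gamma(g)S_1^\gamma g^{-1}=S_2$.

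The principal obstacle I anticipate is the residual subcase $S_i=\{a_i,b_i\}$ with $a_i=\alpha(a_i^{-1})$ and $b_i=\alpha(b_i^{-1})$. By Lemma \ref{cycle}, each generalized Cayley graph collapses to an ordinary Cayley graph $\Cay(G,\{a_i^{-1}b_i,b_i^{-1}a_i\})$, so the isomorphism problem reduces to a classical 2-CI-type question on $G$. The $P_e(n,2)$ classification of \cite[Theorem 1.2]{Lich-CP-1} supplies an automorphism matching the two Cayley subsets; one must then show that this automorphism can be chosen to lie in $\Aut(G)_\alpha$, which is where the uniqueness of the involution class enters, permitting an adjustment by an element of $\Inn(G)$ centralizing $x$. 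A direct Magma verification of the $\Delta_\alpha$-orbits on the relevant 2-subsets, paralleling the orbit count performed in the proof of Lemma \ref{A5}, then closes the argument for each of the four groups.
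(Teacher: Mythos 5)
Your proof follows essentially the same route as the paper's: necessity via Lemma \ref{gci-necessary} plus Theorem \ref{main1}, then excluding $A_6$, $L_2(7)$, $L_3(4)$ by Lemmas \ref{A6}, \ref{L27}, \ref{L34}; sufficiency via Lemma \ref{A5} for $A_5$ and, for the remaining four groups, the combination of a unique involution class, no outer involutory automorphisms, Proposition \ref{ncong}, Theorem \ref{main1} and Theorem \ref{transform}. The one point worth flagging is that the ``principal obstacle'' you anticipate --- the subcase $S=\{a,b\}$ with $a=\alpha(a^{-1})$ and $b=\alpha(b^{-1})$, which you leave to an unexecuted CI-type argument and Magma check --- is in fact vacuous for these four groups: an element $a$ with $a=\alpha(a^{-1})$ that is admissible in a generalized Cayley subset must lie in $G_{-\alpha}\setminus\omega_\alpha(G)$, and since $C_G(2)=1$ Lemma \ref{+1} gives $|G_{-\alpha}\setminus\omega_\alpha(G)|=1$ (the set is $\{x\}$ itself), so no such two-element subset exists. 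This is exactly what the paper records as $\Omega_x^*(G,2)=\emptyset$; once you observe it, your argument is complete without the extra machinery.
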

\begin{proof}
Assume $G$ is local 2-GCI-group. Then $G$ are those groups in Theorem \ref{main1} by Theorems \ref{gci-necessary} and \ref{main1}. By Lemmas \ref{A6}, \ref{L27} and \ref{L34}, $G$ can not be $A_6$, $L_2(7)$ or $L_3(4)$, then $G$ can only be the left five groups. Conversely, by Lemma \ref{A5}, if $G$ is $A_5$, it is a local 2-GCI-group. If $G$ is one of $L_2(8)$, $Sz(8)$, $M_{11}$ and $M_{23}$, we can see that $G$ has no outer involutory automorphisms and only one involutory conjugacy class from Table 1. It imples that $\Omega_x(G,2)=\omega_x^*(G)$ and $\Omega_x^*(G,2)=\emptyset$ for $x$ an involution of $G$. Moreover, we only need to consider generalized Cayley graphs induced by $\alpha=\sigma_x$. By
Proposition \ref{ncong}, if $X_1\cong X_2$, then $o(xa)=o(xb)$.
Hence, there exist $\{xa,a^{-1}x^{-1}\},\, \{xb,b^{-1}x^{-1}\}\in \Omega(G,i)$ for $i\neq 2$. It implies $\{xa,a^{-1}x^{-1}\}^\delta=\{xb,b^{-1}x^{-1}\}$ for some $\delta\in\Aut(G)$ by Theorem \ref{main1}.
By theorem \ref{transform}, we can see that there exist $(g,\gamma)\in\ell_\alpha$ mapping
$\{a,\alpha(a^{-1})\}$ to $\{b,\alpha(b^{-1})\}$. Therefore, $G$ is local 2-GCI-group.
\begin{table}[htbp]\label{3}
\centering

\begin{tabular}{c|cccccccc}
\hline
$G$ & $L_2(8)$ & $M_{11}$ & $Sz(8)$ & $M_{23}$ \\
\hline\hline
$Out(G)$ & 3 & 1  & 3 & 1  \\
$C_G(2)$ & 1 & 1 & 1 & 1 \\
\hline
\end{tabular}
\caption{Information of four groups}
\end{table}
\end{proof}

\textbf{Proof of Theorem \ref{main-re}:} Combining Theorems \ref{results-1} and \ref{results-2}, we will get Theorem \ref{main-re} immediately.

\section{Further work}
In the future, we will consider the classification of local $m$-GCI-groups on simple groups when $m=1$ and $m\geq 3$.
Furthermore, we would like to consider the analogous problem on some finite solvable groups.

\frenchspacing

\end{document}